\documentclass[12pt,a4paper,reqno]{amsart}
\usepackage{geometry}
\usepackage{graphicx}
\usepackage{placeins}
\usepackage[numbers]{natbib}
\geometry{
  includeheadfoot,
  margin=2.54cm
}

\DeclareMathOperator*{\Res}{Res}
\newtheorem{theorem}{Theorem}
\newtheorem{lemma}{Lemma}
\theoremstyle{remark}
\newtheorem*{remark}{Remark}

\numberwithin{equation}{section}

\begin{document}
\title[Mean-value of the $n$\textsuperscript{th} derivative of zeta]{A discrete mean-value theorem for the higher derivatives of the Riemann zeta function}

\author{Christopher Hughes}
\address{Department of Mathematics, University of York, York, YO10 5DD, United Kingdom}
\email{christopher.hughes@york.ac.uk}
\author{Andrew Pearce-Crump}
\address{Department of Mathematics, University of York, York, YO10 5DD, United Kingdom}
\email{aepc501@york.ac.uk}

\date{2nd March 2022}

\begin{abstract}
We show that the $n$\textsuperscript{th} derivative of the Riemann zeta function, when summed over the non-trivial zeros of zeta, is real and positive/negative in the mean for $n$ odd/even, respectively. We show this by giving a full asymptotic expansion of these sums.
\end{abstract}

\maketitle

\section{Shanks' Conjecture and statement of results}

Shanks' Conjecture from Shanks~\cite{ShanksConj} says for $\rho = \beta + i \gamma$ a zero of $\zeta(s)$, that
\[
\zeta '(\rho) \text{ is real and positive in the mean. }
\]

This conjecture was first proven by evaluating the discrete first moment of $\zeta'(\rho)$ to leading order in Conrey, Ghosh, Gonek~\cite{CGG88} and for all other terms in Fujii~\cite{FujiiShanks}, with a minor correction to its lowest order terms in Fujii~\cite{FujiiDist}.  We state the precise asymptotic formula shortly, in Theorem~\ref{Shanks' Conjecture}. Other proofs of Shanks' Conjecture have since been given (either explicitly or implied), for example, in Trudgian in \cite{Trud} and in Stopple~\cite{Stopp}.

In this paper we will generalise this to all higher derivatives, by giving explicit asymptotic formulae (including the lower order terms) for the sums of the $n$\textsuperscript{th} derivatives of $\zeta (s)$ evaluated at the zeros $\rho$ of $\zeta (s)$. From the results of Theorem~\ref{General SC} it will follow that
\begin{align*}
\zeta^{(n)}(\rho) \text{ is real and }
    \left\{ \begin{aligned}
        \text{positive } \\
        \text{negative}
    \end{aligned}\right\} \text{ in the mean if $n$ is }
     \left\{ \begin{aligned}
        \text{odd } \\
        \text{even}
    \end{aligned}\right\}.
\end{align*}

Throughout we assume $T$ is sufficiently large and that $|T-\gamma| \gg \frac{1}{\log T}$, where $\gamma$ is the imaginary part of any zero $\rho$. We write $s= \sigma + it$ where $\sigma, t \in \mathbb{R}$ for a general complex number $s \in \mathbb{C}$. We let $C_0$ and $C_1$ be the coefficients in the Laurent expansion of $\zeta (s)$ about $s=1$, which is given by
\[
\zeta(s) = \frac{1}{s-1} + C_0 + C_1 (s-1) + ...
\]

It is clear that Shanks' conjecture follows from the following result.
\begin{theorem}[Fujii] \label{Shanks' Conjecture}
With the notation and assumptions given above, we have
\begin{equation*}
\sum_{0 < \gamma \leq T} \zeta' (\rho) = \frac{T}{4 \pi} \log ^2 \left( \frac{T}{2 \pi} \right) + (-1 + C_0) \frac{T}{2 \pi} \log \left( \frac{T}{2 \pi} \right) + (1 - C_0 - C_0^2 + 3 C_1) \frac{T}{2 \pi} +E(T)
\end{equation*}
where, unconditionally,
\[
E(T) = O \left(T \mathrm{e}^{-C \sqrt{\log T}} \right)
\]
where $C$ is a positive constant. Conditionally upon the Riemann Hypothesis we have
\[
E(T) = O \left(T^{\frac{1}{2}} \log ^{\frac{7}{2}} T \right).
\]
\end{theorem}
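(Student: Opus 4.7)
The plan is to apply the residue theorem to $\zeta'(s)^2/\zeta(s)$, which has residue $\zeta'(\rho)$ at each simple zero $\rho$ of $\zeta$. Let $c = 1 + 1/\log T$ and take $\mathcal{R}$ to be the positively oriented rectangle with vertices $c+i$, $c+iT$, $1-c+iT$, $1-c+i$ (perturbed slightly to avoid zero ordinates). Then
\[
\sum_{0 < \gamma \leq T} \zeta'(\rho) = \frac{1}{2\pi i}\oint_{\mathcal{R}} \frac{\zeta'(s)^2}{\zeta(s)}\,ds,
\]
with any multiple zeros contributing negligibly. The right edge is immediate from the absolutely convergent Dirichlet series $\zeta'(s)^2/\zeta(s) = \sum_n (\log \ast \Lambda)(n)\, n^{-s}$, which integrates to $O(\log^2 T)$; the two horizontal edges are absorbed into $E(T)$ via convexity estimates for $\zeta'$ together with bounds on $1/\zeta$ (from the Vinogradov--Korobov zero-free region unconditionally, or from the standard RH bound conditionally).

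All of the structure is carried by the left edge. The functional equation $\zeta(s) = \chi(s)\zeta(1-s)$, differentiated and divided, gives
\[
\frac{\zeta'(s)^2}{\zeta(s)} = \frac{\chi'(s)^2}{\chi(s)}\zeta(1-s) \;-\; 2\chi'(s)\zeta'(1-s) \;+\; \chi(s)\frac{\zeta'(1-s)^2}{\zeta(1-s)}.
\]
Substituting $u = 1-s$ recasts the left-edge integral as three integrals on $\Re u = c$, where $\zeta(u)$, $\zeta'(u)$, and $\zeta'(u)^2/\zeta(u)$ are each absolutely convergent Dirichlet series and can be treated term by term. Each summand, paired with the appropriate $\chi$-factor (whose large-$|t|$ behaviour comes from Stirling's formula, notably $\chi'(s)/\chi(s) = -\log(|t|/2\pi) + O(1/|t|)$), produces an oscillatory integral to be handled by the classical saddle-point / Riemann--Siegel stationary-phase method at $t \approx 2\pi n$. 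In effect this reduces the evaluation to collecting residues at the pole $u = 1$, where $\zeta(u)$ has a simple pole, $\zeta'(u)$ a double pole, and $\zeta'(u)^2/\zeta(u)$ a triple pole.

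The main obstacle is the careful bookkeeping needed to assemble every subdominant contribution with the correct sign and coefficient. Stirling's expansion must be carried to enough orders to produce both the leading $\log^2$ weight and the subleading $\log$ and constant-order weights; the Laurent expansion $\zeta(u) = (u-1)^{-1} + C_0 + C_1(u-1) + \dots$, together with the induced expansions of $\zeta'$ and of $\zeta'^2/\zeta$ at $u = 1$, must be carried far enough to harvest the Stieltjes constants $C_0$ and $C_1$; and the three residue contributions from the three pieces of the decomposition must combine delicately to produce the claimed coefficients $-1 + C_0$ and $1 - C_0 - C_0^2 + 3C_1$. Finally, the error $E(T)$ is bounded unconditionally using standard estimates on $\log(1/\zeta)$ inside the Vinogradov--Korobov zero-free region, yielding $O(Te^{-C\sqrt{\log T}})$; under RH, the classical bound $|\log\zeta(s)| \ll \log T/\log\log T$ uniform for $\sigma \geq \tfrac{1}{2} + 1/\log\log T$ sharpens this to $O(T^{1/2}\log^{7/2} T)$.
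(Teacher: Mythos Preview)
Your setup matches the paper exactly: the integrand $\zeta'(s)^2/\zeta(s)$ is precisely $\frac{\zeta'}{\zeta}(s)\,\zeta'(s)$, the paper's integrand specialised to $n=1$; the contour, the treatment of the right and horizontal edges, and the functional-equation reduction of the left edge are all the same (the paper's Lemma~\ref{functional zeta(1-s)} and~\eqref{eq:LogDeriv2} produce, after expanding, exactly your three-term decomposition). So far so good.

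The gap is in the sentence ``In effect this reduces the evaluation to collecting residues at the pole $u=1$.'' Stationary phase (Gonek's Lemma~5 in the paper) does \emph{not} produce a residue: it converts each oscillatory integral $\int \chi(1-u)\bigl(\sum b_m m^{-u}\bigr)\log^k(t/2\pi)\,dt$ into a \emph{finite arithmetic sum} $\sum_{m\le T/2\pi} b_m \log^k m$. Your three pieces combine (after the identity $\sum_{r\mid n}\Lambda(r)=\log n$) to give $-\sum_{mr\le T/2\pi}\Lambda(r)\log r$, which is the paper's Lemma~\ref{Lemma 11}. Turning \emph{that} sum into the residue of $(\zeta'/\zeta)'(s)\,\zeta(s)\,Y^s/s$ at $s=1$ is a separate and substantial step: the paper uses a truncated Perron formula (Lemma~\ref{lem:SumOverLambdaEqualsLR}) and then shifts the Perron contour leftwards. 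You have skipped this entirely, and it is precisely here that the main term, the Stieltjes constants, and the error $E(T)$ are all generated.

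Relatedly, your error analysis does not stand as written. Unconditionally, the $e^{-C\sqrt{\log T}}$ saving comes from shifting the Perron line only as far as the classical zero-free region $\sigma=1-C/\log V$ (Vinogradov--Korobov would give more, not $e^{-C\sqrt{\log T}}$). Under RH, the paper obtains $\log^{7/2}$ by pushing the Perron contour past the critical line to $\sigma=-1/\log T$, picking up residues at the zeros $\rho$, and bounding $\sum_{\gamma<V}|\zeta'(\rho)|/|\rho|$ via Cauchy--Schwarz together with Gonek's moment $\sum_{\gamma\le T}|\zeta'(\rho)|^2\ll T\log^4 T$. Your proposed route through $|\log\zeta(s)|\ll\log T/\log\log T$ is a different mechanism, and you have not indicated where in your argument it would be applied or why it yields the exponent $7/2$.
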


The aim of this paper is to generalise the earlier result on Shanks' Conjecture to all $n$\textsuperscript{th} derivatives of zeta. Kaptan, Karabulut, Y{\i}ld{\i}r{\i}m  \cite{KKY} found the main term for the mean of $\zeta^{(n)}(\rho)$.
\begin{theorem}[Kaptan, Karabulut and Y{\i}ld{\i}r{\i}m] \label{KKY}
With the notation and assumptions given above, we have
\begin{equation*}
\sum_{0 < \gamma \leq T} \zeta^{(n)} (\rho) =
 (-1)^{n+1} \frac{1}{n+1} \frac{T}{2 \pi} \log^{n+1} \left( \frac{T}{2 \pi} \right) + O (T \log^{n} T).
\end{equation*}
\end{theorem}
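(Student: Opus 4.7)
The plan is to apply the contour integration method of Conrey--Ghosh--Gonek \cite{CGG88}. Let $c = 1 + 1/\log T$ and consider
\[
J(T) := \frac{1}{2\pi i}\oint_{R}\frac{\zeta'(s)}{\zeta(s)}\,\zeta^{(n)}(s)\,ds,
\]
over the positively oriented rectangle $R$ with vertices $1-c+i$, $c+i$, $c+iT$, $1-c+iT$. Since $\zeta'/\zeta$ has simple poles with residue equal to the multiplicity at each nontrivial zero, and the pole at $s=1$ lies outside $R$, the residue theorem gives $J(T) = \sum_{0<\gamma\leq T}\zeta^{(n)}(\rho)$. The bottom edge contributes $O(1)$; under the spacing hypothesis $|T-\gamma|\gg 1/\log T$, the top edge contributes $O(\log^{n+2}T)$; and the right vertical at $\Re s = c$ contributes $O(\log^{n+1}T)$ by bounding term-by-term the absolutely convergent Dirichlet series $\frac{\zeta'}{\zeta}(s)\zeta^{(n)}(s) = (-1)^{n+1}\sum_{N\geq 2}N^{-s}\sum_{d\mid N}\Lambda(d)(\log(N/d))^n$.

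The main term lives on the left vertical at $\Re s = 1-c$. Applying the functional equation $\zeta(s)=\chi(s)\zeta(1-s)$ with Leibniz's rule yields
\[
\zeta^{(n)}(s)=\sum_{j=0}^{n}\binom{n}{j}(-1)^j\chi^{(n-j)}(s)\zeta^{(j)}(1-s),\qquad \frac{\zeta'}{\zeta}(s)=\frac{\chi'}{\chi}(s)-\frac{\zeta'}{\zeta}(1-s).
\]
Since $\Re(1-s) = c > 1$ on this line, the factors $\zeta^{(j)}(1-s)$ and $\zeta'/\zeta(1-s)$ admit convergent Dirichlet series in $m^{s-1}$, so the integrand decomposes into terms of the two shapes $\frac{\chi'}{\chi}(s)\chi^{(a)}(s)(\log m)^k m^{s-1}$ and $\chi^{(a)}(s)\Lambda(m')(\log m)^k(mm')^{s-1}$. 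Each is handled by a Gonek-style saddle-point lemma: $\frac{1}{2\pi i}\int_{1-c+i}^{1-c+iT}\chi^{(a)}(s) N^{s-1}\,ds$ collects a contribution $(-\log N)^a$ per integer $N\leq T/(2\pi)$ from the saddle point $t = 2\pi N$, using the Stirling-type asymptotic $\chi^{(a)}(s)/\chi(s) \sim (-\log(t/2\pi))^a$; the additional factor $\chi'/\chi(s)$ at the saddle evaluates to $-\log N$.

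The terms paired with $\chi'/\chi(s)$ cancel to leading order thanks to the identity $\sum_{j=0}^{n}\binom{n}{j}(-1)^{n-j}=0$ for $n\geq 1$, so the main term is produced entirely by the $\zeta'/\zeta(1-s)$ cross-terms. Summing over pairs $(m,m')$ with $mm' = N$ and using the binomial identity $\sum_{j}\binom{n}{j}(-\log N)^{n-j}(\log m)^j = (-\log m')^n$, the saddle-point coefficient at $N$ collapses to $\sum_{d\mid N}\Lambda(d)(-\log d)^n$; and the key asymptotic
\[
\sum_{N\leq T/(2\pi)}\sum_{d\mid N}\Lambda(d)(-\log d)^n = \frac{(-1)^n}{n+1}\cdot\frac{T}{2\pi}\log^{n+1}\!\left(\frac{T}{2\pi}\right) + O(T\log^n T),
\]
proved via the Mertens-style estimate $\sum_{p\leq X}(\log p)^{n+1}/p\sim(\log X)^{n+1}/(n+1)$, supplies the stated main term once the sign from the contour orientation is accounted for. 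The principal technical obstacle is the precise Gonek-style evaluation of the oscillatory integrals with uniform control across the derivatives $\chi^{(a)}$, together with verifying the leading-order cancellation of the $\chi'/\chi$-paired terms (responsible for the exact $1/(n+1)$ coefficient emerging from the prime sum); this mirrors Fujii's \cite{FujiiShanks} $n=1$ analysis but with significantly greater combinatorial complexity, presenting no fundamentally new difficulty beyond the bookkeeping.
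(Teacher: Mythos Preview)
Your approach is essentially the paper's: the same contour integral, the same functional-equation expansion on the left edge, Gonek's lemma (Lemma~\ref{Gonek 5}), and the binomial identity $\sum_{j}\binom{n}{j}(-1)^j=0$ killing the $\chi'/\chi$-paired terms (this is exactly the paper's computation $J_1=0$), reducing everything to $\sum_{mr\leq T/2\pi}\Lambda(r)\log^n r$. Two remarks. First, your top-edge bound $O(\log^{n+2}T)$ is too strong: the spacing hypothesis controls $\zeta'/\zeta$ but not $\zeta^{(n)}$, and convexity only gives $O(T^{1/2}\log^{n+2}T)$ as in Lemma~\ref{main S^T 2}; this is harmless for the target error $O(T\log^n T)$. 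Second, the one substantive difference is the endgame: the paper evaluates the arithmetic sum via a second Perron integral and the residue of $(\zeta'/\zeta)^{(n)}\zeta(s)Y^s/s$ at $s=1$ (which is what delivers the full lower-order expansion of Theorem~\ref{General SC}), whereas your elementary route through $\sum_{r\leq Y}\Lambda(r)(\log r)^n/r=\frac{1}{n+1}\log^{n+1}Y+O(\log^n Y)$ is the natural shortcut when only the leading term is wanted.
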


Similar to us, and all other previous papers on this and other calculations, they start by rewriting the sum as a Cauchy residue integral, but they only proceed to calculate the main term in their subsequent calculations, while we also calculate the full asymptotic and error terms. (In fact, we only became aware of their paper while writing up our results).

To give the lower order terms in the expansion, in addition to the coefficients $C_j$ defined previously, let $A_j$ be the coefficients in the Laurent expansion for $\frac{\zeta' (s)}{\zeta (s)}$ about $s=1$, given by
\[
\frac{\zeta' (s)}{\zeta (s)} = - \frac{1}{s-1} + \sum_{j=0}^\infty A_j (s-1)^j.
\]
Note that the $A_j$ are related to the $C_j$ by the following recursive formula, as shown in Israilov~\cite{LaurentZetaPrime}, with equivalent forms in Maslanka~\cite{LiCoefficients} and Bomberi~\cite{Bombieri}:

\[
A_n =
\begin{cases}
C_0 &\text{ if $n=0$}\\
(n+1)C_n - \sum_{j=0}^{n-1} A_j C_{n-1-j} &\text{ if $n \geq 1$}.\\
      \end{cases}
\]

\begin{remark}
Our constants $C_j$ are related to the Stieltjes constants $\gamma_j$ which are normally used in the Laurent expansion for $\zeta (s)$ about $s=1$ by
\[
C_j = \frac{(-1)^j}{j!}\gamma_j.
\]
We have decided to use $C_j$ for simplicity in our formulae, and to remain consistent with Fujii's papers on Shanks' Conjecture. Clearly our coefficients $A_j$ are then also related to $\gamma_j$.
\end{remark}

\begin{theorem}\label{General SC}
With the notation and assumptions given above, we have
\begin{align*}
\sum_{0 < \gamma \leq T} \zeta^{(n)} (\rho) &=
 (-1)^{n+1} \frac{1}{n+1} \frac{T}{2 \pi} \log^{n+1} \left( \frac{T}{2 \pi} \right)   \\
&+ (-1)^{n+1} \sum_{k=0}^n \binom{n}{k}  (-1)^ {k}  k!  \left(-1 + \sum_{j=0}^k (-1)^j C_j \right) \frac{T}{2 \pi} \log^{n-k} \left( \frac{T}{2 \pi} \right) \\
&+ n! A_n  \frac{T}{2 \pi}  + E_n(T)
\end{align*}
where
\[
E_n(T) = O \left(T  \mathrm{e}^{-C \sqrt{\log T}} \right)
\]
with $C$ is a positive constant. If we assume the Riemann Hypothesis, then
\[
E_n(T) = O \left(T^{\frac{1}{2}} \log^{n+\frac{5}{2}} T \right).
\]
In the case $n=1$, we are able to show that under RH, 
\[
E_1(T) = O \left(T^{\frac{1}{2}} \log^{\frac{13}{4}} T \right).
\]
\end{theorem}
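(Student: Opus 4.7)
I would follow the framework of Conrey--Ghosh--Gonek and Fujii. Set $a = 1 + 1/\log T$ and let $\mathcal{R}$ be the positively oriented rectangle with vertices $a+i$, $a+iT$, $1-a+iT$, $1-a+i$. Since $s=1$ lies below $\mathcal{R}$ and there are no nontrivial zeros with $0 < \gamma \le 1$, Cauchy's theorem gives
\[
\sum_{0 < \gamma \le T}\zeta^{(n)}(\rho) = \frac{1}{2\pi i}\oint_{\mathcal{R}}\frac{\zeta'(s)}{\zeta(s)}\zeta^{(n)}(s)\,ds.
\]
The bottom segment is $O(1)$. On the right vertical $\sigma=a$, the absolutely convergent Dirichlet series $\frac{\zeta'(s)}{\zeta(s)}\zeta^{(n)}(s) = (-1)^{n+1}\sum_{N}c_n(N) N^{-s}$ with $c_n(N) = \sum_{d\mid N}\Lambda(d)\log^n(N/d) \ll \log^{n+1} N$ gives a termwise contribution of size $O(\log^{n+1}T)$. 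The top segment at height $T$ is controlled by the hypothesis $|T-\gamma|\gg 1/\log T$, which forces $\zeta'/\zeta(\sigma+iT) \ll \log^2 T$ uniformly in $\sigma$, combined with standard convexity or RH bounds on $\zeta^{(n)}(\sigma+iT)$; this contributes to the error term.

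The main term is extracted from the left vertical $\sigma = 1-a$ via the functional equation. From $\zeta(s) = \chi(s)\zeta(1-s)$ we get $\frac{\zeta'(s)}{\zeta(s)} = \frac{\chi'(s)}{\chi(s)} - \frac{\zeta'(1-s)}{\zeta(1-s)}$ and, by Leibniz, $\zeta^{(n)}(s) = \sum_{k=0}^n \binom{n}{k}(-1)^k \chi^{(n-k)}(s)\,\zeta^{(k)}(1-s)$. Because $\mathrm{Re}(1-s) = a > 1$ on this line, $\zeta^{(k)}(1-s)$ and $\zeta'(1-s)/\zeta(1-s)$ admit absolutely convergent Dirichlet series; multiplying and regrouping, the integrand becomes a finite combination of terms of the shape $\chi^{(j)}(s)\, m^{s-1}$ weighted by polynomials in $\log m$ and by arithmetic coefficients (the divisor-like $b_k(N) = \sum_{d\mid N}\Lambda(d)\log^k(N/d)$ for the part involving $\zeta'(1-s)/\zeta(1-s)$, and $\log^k m$ for the part involving $\chi'/\chi$).

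To evaluate each integral I would invoke a Gonek-type lemma: writing $\chi^{(j)}(s)/\chi(s) = (-\log(t/(2\pi)))^j + \text{lower-order Stirling terms}$ and using stationary phase at $t_0 = 2\pi m$,
\[
\frac{1}{2\pi i}\int_{1-a+i}^{1-a+iT}\chi^{(j)}(s)\,m^{s-1}\,ds = (-\log m)^j\,\mathbf{1}_{m \le T/(2\pi)} + (\text{Stirling corrections}) + O(m^{-1/2}\log^j T).
\]
The ``type-II'' piece (involving $\zeta'(1-s)/\zeta(1-s)$) collapses, via the binomial identity $\sum_{k}\binom{n}{k}(-\log N)^{n-k}\log^k(N/d) = (-\log d)^n$, to $(-1)^{n+1}\frac{T}{2\pi}\sum_{d \le T/(2\pi)}\Lambda(d)\log^n d/d$; the Laurent expansion $-\zeta'(s)/\zeta(s) = 1/(s-1) - \sum_j A_j(s-1)^j$ combined with Perron's formula (and the classical PNT error) evaluates this sum and produces exactly the leading $(-1)^{n+1}\frac{1}{n+1}\frac{T}{2\pi}\log^{n+1}(T/(2\pi))$ term together with the $n!A_n\frac{T}{2\pi}$ term. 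The ``type-I'' piece (involving $\chi'/\chi$) vanishes to leading order by $\sum_{k=0}^n\binom{n}{k}(-1)^k = 0$ for $n \ge 1$; its surviving sub-leading contributions, together with the floor-function discrepancy $\lfloor T/(2\pi d)\rfloor - T/(2\pi d)$ in the type-II sum, yield the middle polynomial-in-$\log T$ terms whose coefficients are expressible in the Laurent constants $C_j$.

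The principal obstacle will be the combinatorial and analytic bookkeeping required to check that these various Stirling, Perron, and floor-function contributions reassemble, via the Israilov recursion $A_n = (n+1)C_n - \sum_{j=0}^{n-1}A_j C_{n-1-j}$ linking $A_n$ and $C_n$, into the compact binomial sum written in the theorem. For the error terms, the unconditional $T\exp(-C\sqrt{\log T})$ is inherited from the PNT error in the prime sums over $d \le T/(2\pi)$; under RH, the bound $T^{1/2}\log^{n+5/2}T$ comes from applying RH-sharp estimates both on the top segment and in the Perron evaluation; the improved $T^{1/2}\log^{13/4}T$ for $n=1$ exploits the classical estimate $\int_0^T|\zeta'(1/2+it)|^2\,dt \ll T\log^3 T$, which sharpens the contribution from the top segment specifically in the first-derivative case and is not directly available for $n \ge 2$.
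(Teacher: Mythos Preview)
Your framework matches the paper through the reduction to the arithmetic sum: the contour $\mathcal{R}$, the trivial bounds on the bottom/right/top, the functional-equation conversion of the left side, and the application of Gonek's lemma are all exactly what the paper does, and you correctly identify the key identity that collapses the ``type-II'' piece to $(-1)^n\sum_{mr\le T/(2\pi)}\Lambda(r)\log^n r$. However, from that point on your sketch diverges from the paper in ways that are either incorrect or leave real gaps.

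First, the middle $C_j$-terms. You say these arise from ``surviving sub-leading contributions'' of the type-I piece together with the floor-function discrepancy $\lfloor T/(2\pi d)\rfloor - T/(2\pi d)$. In the paper the type-I piece vanishes \emph{entirely} (not just to leading order) by the binomial identity, contributing only $O(T^{1/2}\log^{n+1}T)$; there are no Stirling corrections feeding into the main terms. Instead the paper applies Perron directly to $\bigl(\tfrac{\zeta'}{\zeta}\bigr)^{(n)}\zeta(s)\cdot Y^s/s$ (not to $\bigl(\tfrac{\zeta'}{\zeta}\bigr)^{(n)}$ alone), and \emph{all} of the asymptotic terms---leading, the $C_j$-polynomial, and the $n!A_n$ term---come out of the single residue at $s=1$; the $C_j$ enter precisely because $\zeta(s)$ sits in the integrand. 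Your proposed route via $Y\sum_d \Lambda(d)\log^n d/d$ plus a fractional-part sum would require you to extract the $C_j$ from the latter, and you have not indicated how (or whether) that works; no Israilov recursion is needed in the paper's argument.

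Second, the conditional error terms. Under RH the paper shifts the Perron contour to $\sigma=-1/\log T$, picking up residues at the nontrivial zeros; these are bounded via Cauchy--Schwarz against Gonek's discrete second moments $\sum_{0<\gamma\le T}|\zeta^{(k)}(\tfrac12+i\gamma)|^2 \ll T\log^{2k+2}T$. That is the source of the exponent $n+\tfrac52$, and you do not mention this mechanism. More seriously, your account of the $n=1$ improvement is wrong: it does \emph{not} come from the continuous mean $\int_0^T|\zeta'(\tfrac12+it)|^2\,dt$ applied to the top segment (the top segment is already $O(T^{1/2}\log^{n+2}T)$ and is never the bottleneck). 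It comes from Garaev's discrete first-moment bound $\sum_{0<\gamma\le T}|\zeta'(\tfrac12+i\gamma)| \ll T\log^{9/4}T$, which sharpens the zero-residue sum in the Perron step and has no analogue for $\zeta^{(n)}$ with $n\ge 2$.
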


\begin{remark}
    After this paper was published, the second author in his PhD thesis \cite[Section 3.1.6]{theAPC} showed that a similar improvement can be made for any $n$, that is the error term $E_n(T)$ under RH in the theorem can be replaced with $O \left(T^{\frac{1}{2}} \log^{n+\frac{9}{4}} T \right)$.
\end{remark}

\begin{remark}
    After the publication of this paper, the authors \cite{HPC} found a neater way of writing the result, noting it can be expressed as an integral
    \begin{multline*}
		\sum_{0 < \gamma \leq T} \zeta ^{(n)} (\rho) 
        =  \frac{n!}{2 \pi} \int_{1}^{T} \left( A_n + \frac{(-1)^{n+1} L^{n+1}}{(n+1)!} +(-1)^{n+1} \sum_{m=0}^{n} \frac{ C_{n-m} L^m }{m!} \right) \ dt + E_n(T),
	\end{multline*}
    where $L = \log \frac{t}{2\pi}$.

\end{remark}

\section{Overview of the Proof}
In this section we give a brief outline of the paper, including the method we will follow to prove Theorem~\ref{General SC}.

In Section~\ref{sect:prelim}, we state some preliminary lemmas that we will use throughout the paper.

In Section~\ref{sect:beginProof}, we begin by considering the integral $S$ given by
\begin{equation*}
S= \frac{1}{2 \pi i} \int_R \frac{\zeta '}{\zeta}(s) \zeta ^{(n)} (s) \ ds
\end{equation*}
where $R$ is the positively oriented rectangular contour with vertices $c+i, c+iT, 1-c+iT, 1-c+i$ with $c=1 + \frac{1}{\log T}$. The non-trivial zeros of $\zeta (s)$ up to a height $T$ are contained within $R$ and so by Cauchy's Theorem the integral represents the summation
\[
S= \sum_{0 < \gamma \leq T} \zeta ^{(n)} (\rho)
\]
in question. We then show that the contribution to the integral from the bottom, top and right-hand side of the contour is contained within the error term of the theorem, so the main contribution comes from the integral along the left-hand side of the integral.

In Section~\ref{sect:I}, we evaluate this part of the contour integral. Through this, in Lemma~\ref{Lemma 11} we link the integral to a summation
\[
(-1)^n \sum_{m r \leq \frac{T}{2 \pi}} \Lambda(r) \log^n r
\]
which is then the main object in question.

In Section~\ref{sect:EvaluateS}, Lemma~\ref{lem:SumOverLambdaEqualsLR} we  use Perron's formula to evaluate this sum as a complex integral up a vertical line just to the right of the critical strip.

In Section~\ref{sect: ErrorTerm}, we will evaluate that complex integral and show that 
\[
\frac{1}{2 \pi i } \int_{c- iV}^{c + iV}\left( \frac{\zeta'}{\zeta}(s) \right)^{(n)} \zeta(s) \frac{Y^s}{s} \ ds = \Res_{s=1} \left( \frac{\zeta'}{\zeta}(s) \right)^{(n)} \zeta(s) \frac{Y^s}{s} + E_n(Y)
\]
where $Y=\frac{T}{2\pi}$ and $V$ is given explicitly to optimise the error term $E_n(Y)$, which we can describe both unconditionally and under the assumption of RH. We will also show, via a different argument, a slight improvement on the error term for $n=1$. 

Finally, in Section~\ref{sect:8}, we will evaluate the residue at $s=1$ to find the asymptotic expansion described in Theorem~\ref{General SC} by considering the Laurent expansions around the pole $s=1$ of the terms in the above integrand.

Combining all of these steps together gives the result.

We finish the paper by giving some examples in Section~\ref{sect:9}. Firstly, we can recover the results in Theorem~\ref{Shanks' Conjecture} by specialising to the case $n=1$, and in particular recover Shanks' Conjecture. We then give the first new case where $n=2$, as well as providing some good numerical evidence for the strength of the result.

\section{Preliminary Lemmas}\label{sect:prelim}

We start by stating the strong form of the convexity bound for the zeta function and the $n$\textsuperscript{th} derivatives of the zeta function. The case $n=0$ can be found in Ivi\'c~\cite[Ch. I, Sect. 1.5]{Ivic}, while the cases for all other $n$ can be derived from this using Cauchy's theorem on derivatives of analytic functions on $\zeta (s)$ in a small disc of radius $\frac{1}{\log t}$ centred at $s=\sigma + it$.
\begin{lemma} \label{Convexity}
For $t \geq t_0 > 0$ uniformly in $\sigma$, 
\begin{equation*} \zeta^{(n)} (\sigma + it) \ll
\begin{cases}
t^{\frac{1}{2} - \sigma} \log^{n+1} t &\text{ if $\sigma \leq 0$}\\
t^{\frac{1}{2}(1- \sigma)} \log^{n+1} t &\text{ if $ 0 \leq \sigma \leq 1$}\\
\log^{n+1} t &\text{ if $\sigma \geq 1$}\\
\end{cases}
\end{equation*}
\end{lemma}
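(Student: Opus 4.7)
My plan is to follow the hint given in the excerpt and split the argument into two steps: the $n=0$ case, which I would simply quote from Ivi\'c, and the deduction of the $n\geq 1$ bound by Cauchy's integral formula for derivatives applied in a small disc. The $n=0$ statement already has the form given in the lemma with the $\log^{n+1}t$ factor replaced by a single $\log t$, so all the work lies in generating the extra $(\log t)^n$ without worsening the $t$-power.

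For the second step, given $s_0=\sigma+it$ with $t\geq t_0$, I would set $r=1/\log t$ and use
\[
\zeta^{(n)}(s_0)=\frac{n!}{2\pi i}\oint_{|w-s_0|=r}\frac{\zeta(w)}{(w-s_0)^{n+1}}\,dw,
\]
so that the trivial ML-estimate gives
\[
\bigl|\zeta^{(n)}(s_0)\bigr|\leq n!\,(\log t)^{n}\max_{|w-s_0|=r}|\zeta(w)|.
\]
The factor $(\log t)^n$ pulled out here is exactly what is needed to upgrade the $\log t$ appearing in the $n=0$ bound to the $\log^{n+1}t$ required by the lemma. To finish, I would bound $|\zeta(w)|$ uniformly on the circle by the $n=0$ case, using the identity $t^{1/\log t}=e$ and the asymptotic $\log t'\sim \log t$ on the disc to absorb the small shifts of $\sigma'$ and $t'$ away from $\sigma$ and $t$ into the implicit constant.

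The one point that needs actual care---and which I would call the main obstacle, even though it is a small one---is the behaviour near the boundary lines $\sigma=0$ and $\sigma=1$, where the disc of radius $1/\log t$ straddles two different regimes in the piecewise $n=0$ bound. I would handle this by checking that the two applicable bounds agree up to bounded factors on either side of each boundary: $t^{1/2-\sigma}$ and $t^{(1-\sigma)/2}$ coincide at $\sigma=0$, while $t^{(1-\sigma)/2}$ and $1$ agree up to $t^{O(1/\log t)}=O(1)$ at $\sigma=1$. With that verified, one may apply whichever of the two bounds is weaker uniformly on the whole disc, and the lemma drops out in all three regimes simultaneously.
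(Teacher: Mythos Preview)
Your proposal is correct and follows exactly the approach indicated in the paper: the $n=0$ case is quoted from Ivi\'c, and the general case is obtained by Cauchy's integral formula for derivatives applied on a disc of radius $1/\log t$ centred at $s=\sigma+it$. Your explicit treatment of the boundary regimes at $\sigma=0$ and $\sigma=1$ in fact goes slightly beyond what the paper records, which leaves that detail implicit.
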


The following result follows from Gonek~\cite[Sect. 2, p. 126]{Gonek}. 
\begin{lemma}\label{lem:bound on log deriv zeta}
If $T$ is such that $|T-\gamma|\gg \frac{1}{\log T}$ for any ordinate $\gamma$, uniformly for $-1\leq \sigma \leq 2$ we have
\begin{equation}\label{eq:bound on log deriv zeta}
\left( \frac{\zeta '}{\zeta}(s) \right)^{(n)} \ll \log^{n+2} T
\end{equation}
\end{lemma}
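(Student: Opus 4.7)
The plan is to derive the $n$-th derivative estimate from the $n=0$ case (which is Gonek's own bound $(\zeta'/\zeta)(s) \ll \log^{2} T$) by applying Cauchy's integral formula for derivatives on a small disc around $s$. Fix $s = \sigma + iT$ with $-1 \leq \sigma \leq 2$ and set $r = c/\log T$ for a small positive constant $c$ to be chosen. For any $w$ on the circle $|w-s|=r$, write $t_{w} = \operatorname{Im}(w)$, so that $|t_{w}-T| \leq r$, and hence $\log t_{w} \asymp \log T$ together with
\[
|t_{w}-\gamma| \geq |T-\gamma| - r \gg \frac{1}{\log T}
\]
for every zero ordinate $\gamma$, provided $c$ is taken smaller than the implicit constant in the hypothesis $|T-\gamma|\gg 1/\log T$. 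The real part of $w$ lies in a slightly enlarged strip (on which Gonek's argument applies equally well), and $w$ is bounded away from the pole at $s=1$ once $T$ is large. Consequently the $n=0$ bound $(\zeta'/\zeta)(w) \ll \log^{2} T$ holds uniformly on the entire circle.

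Next, since $\zeta'/\zeta$ is holomorphic on the closed disc $|w-s|\leq r$ (every zero $\rho$ satisfies $|s-\rho| \geq |T-\gamma| > r$, and the pole $s=1$ is far away), Cauchy's formula for derivatives yields
\[
\left(\frac{\zeta'}{\zeta}(s)\right)^{(n)} = \frac{n!}{2\pi i} \oint_{|w-s|=r} \frac{(\zeta'/\zeta)(w)}{(w-s)^{n+1}}\, dw.
\]
Estimating by the length of the contour times the maximum of the integrand gives
\[
\left| \left(\frac{\zeta'}{\zeta}(s)\right)^{(n)} \right| \leq \frac{n!}{r^{n}} \max_{|w-s|=r} \left|\frac{\zeta'}{\zeta}(w)\right| \ll \frac{(\log T)^{n}}{c^{n}} \cdot \log^{2} T \ll \log^{n+2} T,
\]
which is precisely \eqref{eq:bound on log deriv zeta}.

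The only genuine care required is verifying that the hypothesis $|T-\gamma| \gg 1/\log T$ propagates from $s$ to every $w$ on the circle, so that Gonek's bound may be invoked uniformly; this is the sole non-mechanical step and is handled simply by shrinking the implicit constant. The fact that we lose exactly one factor of $\log T$ per derivative then reflects the natural scale $1/\log T$ at which the zeros of $\zeta$ cluster at height $T$.
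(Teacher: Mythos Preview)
Your proof is correct. The paper does not give its own argument for this lemma, merely citing Gonek~\cite[Sect.~2, p.~126]{Gonek}; your Cauchy-on-a-small-disc argument is precisely the standard mechanism for lifting the $n=0$ bound to higher derivatives, and is in fact the same device the paper itself invokes one lemma earlier (for the convexity bounds on $\zeta^{(n)}$ in Lemma~\ref{Convexity}, where it writes ``derived from this using Cauchy's theorem on derivatives of analytic functions on $\zeta(s)$ in a small disc of radius $\frac{1}{\log t}$'').
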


The functional equation for $\zeta(s)$ can be given by
\begin{equation}
\zeta (1-s) = \chi (1-s) \zeta (s) \label{eq:FuncEqu2}
\end{equation}
where
\[
\chi (s) = 2^s \pi^{s-1} \sin \left( \frac{\pi s}{2} \right) \Gamma (1-s)
\]
and $\Gamma(s)$ is the Gamma function. Taking the logarithmic derivative gives
\begin{equation}
\frac{\zeta '}{\zeta} (1-s) = \frac{ \chi '}{\chi} (1-s) - \frac{\zeta '}{\zeta} (s). \label{eq:LogDeriv}
\end{equation}

To obtain the form of the functional equation for $\zeta ^{(n)} (1-s)$ that we will need, we will use the following result from Gonek~\cite{Gonek}, which is his Lemma 6.

\begin{lemma} \label{Gonek Lemma 6}
For $\sigma$ fixed, $n \geq 0$ and $t \geq 1$ we have
\[
\chi^{(n)} (1-s) = (-1)^n \chi (1-s) \log^n \left( \frac{t}{2 \pi} \right) + O (t^{\sigma -\frac{3}{2}} \log^{n-1} t).
\]
\end{lemma}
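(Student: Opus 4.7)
The plan is to reduce everything to the digamma function by writing $\chi(1-s)=e^{F(s)}$ with $F(s)=\log\chi(1-s)$ and applying Fa\`a di Bruno to the exponential. Starting from the reflection identity $\Gamma(s)\Gamma(1-s)=\pi/\sin(\pi s)$, the functional factor can be put in the form $\chi(1-s)=2^{1-s}\pi^{-s}\cos(\pi s/2)\Gamma(s)$, so that
\[
F'(s) = -\log(2\pi) - \tfrac{\pi}{2}\tan(\pi s/2) + \psi(s),
\]
and $F^{(k)}(s) = \psi^{(k-1)}(s) - \tfrac{d^{k-1}}{ds^{k-1}}\!\bigl(\tfrac{\pi}{2}\tan(\pi s/2)\bigr)$ for $k \geq 2$.

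Next I would pin down the sizes of these derivatives for $s=\sigma+it$ with $\sigma$ fixed and $t$ large. Writing sine and cosine in exponential form yields $\tan(\pi s/2) = i + O(e^{-\pi t})$, and similarly for its derivatives, so the trigonometric contributions to every $F^{(k)}$ are exponentially small uniformly in bounded $\sigma$. Stirling's expansion for the digamma function gives $\psi(s)=\log s + O(1/|s|)$ and $\psi^{(k)}(s)=O(1/t^{k})$ for $k\geq 1$. Since $\log s = \log t + i\pi/2 + O(1/t)$, the $-i\pi/2$ from $-\tfrac{\pi}{2}\tan(\pi s/2)$ cancels the $i\pi/2$ inside $\log s$, producing
\[
F'(s) = \log(t/(2\pi)) + O(1/t), \qquad F^{(k)}(s) = O(1/t^{k-1}) \quad (k\geq 2).
\]

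Fa\`a di Bruno then yields
\[
\frac{d^n}{ds^n}\chi(1-s) = \chi(1-s)\sum_{P}\prod_{B\in P} F^{(|B|)}(s),
\]
summed over set partitions $P$ of $\{1,\ldots,n\}$. The singleton partition contributes $\chi(1-s)(F'(s))^n$, which by the binomial theorem expands to $\chi(1-s)\log^n(t/(2\pi)) + O\bigl(\chi(1-s)\log^{n-1}(t)/t\bigr)$. Every other partition contains a block of size at least two, introducing a factor $F^{(k)}=O(1/t)$ alongside at most $n-2$ factors of $F'=O(\log t)$, and therefore contributes at most $O\bigl(\chi(1-s)\log^{n-2}(t)/t\bigr)$, which is strictly smaller. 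Combined with the Stirling-based bound $|\chi(1-s)|\ll t^{\sigma-1/2}$ and the chain-rule identity $(-1)^n\chi^{(n)}(1-s)=\frac{d^n}{ds^n}\chi(1-s)$, this delivers the claimed asymptotic with error $O(t^{\sigma-3/2}\log^{n-1}t)$. The only subtle point is ensuring that the implicit constants are uniform in $\sigma$ over any fixed bounded interval, which follows from the corresponding uniformity in Stirling's expansion and the decay of $\tan(\pi s/2)-i$; the remaining work is routine bookkeeping.
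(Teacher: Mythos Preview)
Your argument is correct. Note, however, that the paper does not supply its own proof of this lemma: it is quoted verbatim as Lemma~6 of Gonek~\cite{Gonek} and used as a black box. There is therefore nothing in the present paper to compare your proof against.

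For context, Gonek's original argument proceeds by induction on $n$, differentiating the relation $\chi'(1-s)/\chi(1-s)=-\log(t/2\pi)+O(t^{-1})$ (which is exactly your computation of $F'(s)$) and feeding the inductive hypothesis back in. Your route via Fa\`a di Bruno applied to $e^{F}$ is a clean non-inductive alternative: once you have the estimates $F'(s)=\log(t/2\pi)+O(1/t)$ and $F^{(k)}(s)=O(t^{1-k})$ for $k\ge 2$, the combinatorics of set partitions immediately isolate $(F')^n$ as the dominant term, with every other partition down by at least a factor of $t^{-1}\log^{-1}t$. Both approaches rest on the same two analytic inputs (Stirling for $\psi$ and the exponential decay of $\tan(\pi s/2)-i$ in the upper half-plane), so the difference is organisational rather than substantive. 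Your remark about uniformity in $\sigma$ over bounded intervals is stronger than required---the lemma only asks for $\sigma$ fixed---but it does no harm.
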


\begin{lemma}\label{functional zeta(1-s)}
For $\sigma \geq 1$ and $t\geq 1$
\[
\zeta ^{(n)} (1-s) = (-1)^n \chi (1-s) \sum_{k=0}^n \binom{n}{k} \log^{n-k} \left(\frac{t}{2 \pi} \right) \zeta^{(k)} (s) + O(t^{\sigma - \frac{3}{2}} \log^{n} t).
\]
\end{lemma}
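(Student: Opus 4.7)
The plan is to differentiate the functional equation \eqref{eq:FuncEqu2} in the form $\zeta(1-s) = \chi(1-s)\zeta(s)$ exactly $n$ times with respect to $s$. By the chain rule the left-hand side contributes $(-1)^n \zeta^{(n)}(1-s)$, so after dividing through by $(-1)^n$ we obtain
\[
\zeta^{(n)}(1-s) = (-1)^n \frac{d^n}{ds^n}\bigl[\chi(1-s)\zeta(s)\bigr].
\]
Applying the Leibniz product rule to the right-hand side yields
\[
\zeta^{(n)}(1-s) = (-1)^n \sum_{k=0}^n \binom{n}{k} \frac{d^{n-k}}{ds^{n-k}}\bigl[\chi(1-s)\bigr] \, \zeta^{(k)}(s),
\]
and the $s$-derivatives of $\chi(1-s)$ can be rewritten in terms of derivatives of $\chi$ via $\frac{d^{n-k}}{ds^{n-k}}\chi(1-s) = (-1)^{n-k}\chi^{(n-k)}(1-s)$.

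Next I would substitute Gonek's Lemma~\ref{Gonek Lemma 6} into each factor $\chi^{(n-k)}(1-s)$, giving
\[
\chi^{(n-k)}(1-s) = (-1)^{n-k}\chi(1-s)\log^{n-k}\!\left(\tfrac{t}{2\pi}\right) + O\!\left(t^{\sigma-\frac{3}{2}}\log^{n-k-1} t\right).
\]
The two factors of $(-1)^{n-k}$ multiply to $+1$, so the main contribution from each summand becomes exactly $\binom{n}{k}\chi(1-s)\log^{n-k}(t/2\pi)\zeta^{(k)}(s)$, matching the claimed main term once the overall factor of $(-1)^n$ is reinstated. This is the bulk of the argument; no subtle cancellations are needed, only careful bookkeeping of the signs.

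The remaining task is to show that all the $O$-terms collapse into the single error $O(t^{\sigma-\frac{3}{2}}\log^n t)$. For $\sigma \geq 1$ the convexity bound of Lemma~\ref{Convexity} gives $\zeta^{(k)}(s) \ll \log^{k+1} t$, so each error summand is bounded by
\[
\binom{n}{k}\, t^{\sigma-\frac{3}{2}}\log^{n-k-1} t \cdot \log^{k+1} t \;\ll\; t^{\sigma-\frac{3}{2}}\log^{n} t,
\]
uniformly in $k$, and summing over the finitely many values $0 \le k \le n$ preserves this bound. The only mildly delicate point is the $k=n$ term, where $\log^{n-k-1}t = \log^{-1}t$ formally appears; this is harmless because $\log^{-1}t \cdot \log^{n+1}t = \log^n t$, still inside the stated error. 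Assembling the main terms and the consolidated error yields precisely the formula in the statement, so I expect the entire argument to be a short, essentially mechanical, application of Leibniz plus Gonek's lemma, with no genuine obstacle beyond the sign accounting.
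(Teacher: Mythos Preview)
Your proposal is correct and follows essentially the same route as the paper: Leibniz on the functional equation, followed by Gonek's Lemma~\ref{Gonek Lemma 6} for each $\chi^{(n-k)}(1-s)$ and the convexity bound of Lemma~\ref{Convexity} to absorb the errors. The paper's proof is the same argument with slightly terser sign bookkeeping, and it handles the $k=n$ term exactly as you do.
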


\begin{proof}
By the functional equation \eqref{eq:FuncEqu2}, and using the Leibniz product rule 
\begin{equation*}
(-1)^n \zeta^{(n)} (1-s)  = \sum_{k=0}^n \binom{n}{k} (-1)^{n-k} \chi^{(n-k)} (1-s) \zeta^{(k)} (s) 
\end{equation*}
Using Lemma~\ref{Gonek Lemma 6} (and Lemma~\ref{Convexity} to bound $\zeta^{(k)}(s)$ for $\sigma\geq 1$), this equals
\begin{multline*}
\sum_{k=0}^n \binom{n}{k} \chi (1-s)  \log^{n-k} \left( \frac{t}{2 \pi} \right) \zeta^{(k)} (s) + O\left(t^{\sigma - \frac{3}{2}}  \sum_{k=0}^n \log^{n-k-1} t \log^{k+1} t \right) \\
= \sum_{k=0}^n \binom{n}{k} \chi (1-s) \log^{n-k} \left( \frac{t}{2 \pi} \right) \zeta^{(k)} (s) + O(t^{\sigma - \frac{3}{2}} \log^{n} t).
\end{multline*}
Dividing through by the factor of $(-1)^n$ gives the result.
\end{proof}

Finally, another well-known result on the logarithmic derivative of $\chi(s)$ (which can be found in~\cite{Gonek}, for example) allows us to write \eqref{eq:LogDeriv} as
\begin{equation}
\frac{\zeta '}{\zeta} (1-s) = - \log \left(\frac{t}{2 \pi} \right) - \frac{\zeta '}{\zeta} (s) + O\left( \frac{1}{|t|} \right) \label{eq:LogDeriv2}
\end{equation}
valid for any fixed $\sigma$ and $|t|>1$, which we will also need later.

\section{Beginning the proof}\label{sect:beginProof}

Throughout we assume $T$ is sufficiently large and satisfies $|T-\gamma| \gg \frac{1}{\log T}$, where $\gamma$ is the imaginary part of any zero $\rho$. (This constraint simplifies the arguments, but has no effect on the resulting expressions.) Consider the integral
\begin{equation}
S= \frac{1}{2 \pi i} \int_R \frac{\zeta '}{\zeta}(s) \zeta ^{(n)} (s) \ ds \label{eq:S}
\end{equation}
where the contour $R$ is the positively oriented rectangular contour with vertices $c+i, c+iT, 1-c+iT, 1-c+i$ with $c=1 + \frac{1}{\log T}$.

By Cauchy's Residue Theorem,
\[
S= \sum_{0 < \gamma \leq T} \zeta ^{(n)} (\rho).
\]

We need to evaluate $S$ in another way to determine the behaviour of $\sum_{0 < \gamma \leq T} \zeta ^{(n)} (\rho)$. To do this, we begin by splitting the integral $S$ along each part of the contour, so
\begin{align*}
S &= \frac{1}{2 \pi i} \left( \int_{c+i}^{c+iT} + \int_{c+iT}^{1-c+iT} + \int_{1-c+iT}^{1-c+i} + \int_{1-c+i}^{c+i} \right) \frac{\zeta '}{\zeta}(s) \zeta ^{(n)} (s) \ ds \\
&= S^R + S^T + S^L + S^B,
\end{align*}
say. We will first bound $S^B, S^T, S^R$ trivially within error terms as follows. Our main aim will then be to evaluate $S^L$ which we will do in the next section.

\begin{lemma}
The integral along the bottom of the contour is $S^B = O(1)$.
\end{lemma}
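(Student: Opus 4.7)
\medskip

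\noindent\textbf{Proof plan for $S^B = O(1)$.}

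The bottom segment is the horizontal line $\{\sigma + i : 1-c \leq \sigma \leq c\}$, where $c = 1 + 1/\log T$. The plan is simply to observe that this is a bounded-length path staying in a bounded, zero-free, pole-free region of the complex plane, on which the integrand is continuous and hence uniformly bounded.

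First I would note that the segment has length $2c - 1 = 1 + 2/\log T \leq 3$ for $T$ sufficiently large, and is contained in the fixed compact rectangle $K = \{\sigma + it : -1 \leq \sigma \leq 2,\ t = 1\}$. Next, since the non-trivial zeros of $\zeta(s)$ all have imaginary parts exceeding $14$, and the only pole of $\zeta'/\zeta$ coming from the pole of $\zeta(s)$ is at $s=1$ (which lies off the segment since $t = 1 \neq 0$), both $\zeta'/\zeta(s)$ and $\zeta^{(n)}(s)$ are holomorphic on $K$. In particular they are bounded there by some absolute constant $M$ independent of $T$.

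Applying the trivial bound for contour integrals,
\[
|S^B| \leq \frac{1}{2\pi}\, \mathrm{length}(K) \cdot \sup_{s \in K} \left|\tfrac{\zeta'}{\zeta}(s)\, \zeta^{(n)}(s)\right| \leq \frac{3M}{2\pi} = O(1),
\]
which gives the claim. There is no real obstacle here; this is the easy side of the contour estimate, and the work is all in bounding $S^R$ and $S^T$, which require the convexity estimates from Lemma~\ref{Convexity} and the bound from Lemma~\ref{lem:bound on log deriv zeta}. The only thing to be careful about is to confirm that the assumption $|T - \gamma| \gg 1/\log T$ is irrelevant for $S^B$ (it concerns the top segment, not the bottom), and that the left endpoint $\sigma = 1-c$ drifts only by $O(1/\log T)$ as $T \to \infty$, so the enclosing compact set $K$ can be chosen independently of $T$.
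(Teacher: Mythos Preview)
Your argument is correct and is exactly the approach the paper takes: the paper's proof is the single sentence ``This follows as the integral is of finite length with a bounded integrand,'' and your write-up simply unpacks that sentence carefully. No changes are needed.
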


\begin{proof}
This follows as the integral is of finite length with a bounded integrand.
\end{proof}

\begin{lemma} \label{main S^T 2}
The integral along the top of the contour is $S^T = O(T^{\frac{1}{2}} \log^{n+2} T)$.
\end{lemma}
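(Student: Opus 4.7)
The plan is to parametrise $S^T$ as a horizontal integral at height $t=T$ with $\sigma$ running from $c=1+\tfrac{1}{\log T}$ down to $1-c=-\tfrac{1}{\log T}$, and to bound the integrand pointwise using the two input tools already available: Lemma~\ref{Convexity} for $|\zeta^{(n)}(s)|$ (the size depends heavily on the strip $\sigma$ lies in) and Lemma~\ref{lem:bound on log deriv zeta} (with $n=0$) for $|(\zeta'/\zeta)(s)| \ll \log^2 T$, which is valid throughout $-1\leq \sigma\leq 2$ under the hypothesis $|T-\gamma|\gg 1/\log T$ we have assumed. Since the entire horizontal range is contained in $-1\leq \sigma\leq c\leq 2$, the logarithmic derivative bound is available uniformly.

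I would then split the $\sigma$-interval into the three pieces where $\zeta^{(n)}$ behaves differently:
\begin{enumerate}
\item $1\leq \sigma\leq c$, of length $1/\log T$, where $|\zeta^{(n)}(s)|\ll \log^{n+1}T$;
\item $0\leq \sigma\leq 1$, where $|\zeta^{(n)}(s)|\ll T^{(1-\sigma)/2}\log^{n+1}T$;
\item $1-c\leq \sigma\leq 0$, of length $1/\log T$, where $|\zeta^{(n)}(s)|\ll T^{1/2-\sigma}\log^{n+1}T\ll T^{1/2}\log^{n+1}T$ since $-\sigma\leq 1/\log T$ forces $T^{-\sigma}\ll 1$.
\end{enumerate}
Multiplying in each piece by the uniform factor $\log^2 T$ from $\zeta'/\zeta$, pieces (1) and (3) each contribute $O(T^{1/2}\log^{n+2}T)$ once the factor of interval length $1/\log T$ is taken into account (piece (1) contributes only $O(\log^{n+2}T)$, which is absorbed). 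The central piece is handled by
\[
\int_0^1 T^{(1-\sigma)/2}\, d\sigma \ll \frac{T^{1/2}}{\log T},
\]
so piece (2) also gives $O(T^{1/2}\log^{n+2}T)$.

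Adding the three contributions yields $S^T = O(T^{1/2}\log^{n+2}T)$, as required. There is no real obstacle here: the only thing to be careful about is to use the convexity bound in the form appropriate to the correct strip, and to verify that $\sigma$ never exits the range $[-1,2]$ so that Lemma~\ref{lem:bound on log deriv zeta} applies throughout; both are immediate from $c=1+1/\log T$. The dominant contribution is from $\sigma$ near $0$, which is exactly what produces the $T^{1/2}$ in the final bound.
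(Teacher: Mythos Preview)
Your proposal is correct and follows essentially the same route as the paper's own proof: bound $\zeta'/\zeta$ uniformly by $\log^2 T$ via Lemma~\ref{lem:bound on log deriv zeta}, split the $\sigma$-range into $[1-c,0]$, $[0,1]$, $[1,c]$, apply the appropriate case of Lemma~\ref{Convexity} on each, and observe that all three pieces are $O(T^{1/2}\log^{n+2}T)$. The paper presents the same computation slightly more tersely, without your explicit remark that $T^{-\sigma}\ll 1$ on $[1-c,0]$ or the evaluation $\int_0^1 T^{(1-\sigma)/2}\,d\sigma\ll T^{1/2}/\log T$, but the argument is identical.
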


\begin{proof}
By the convexity result from Lemma~\ref{Convexity} and using Lemma~\ref{lem:bound on log deriv zeta} to bound the logarithmic derivative, we may write
\begin{align*}
    S^T & \ll \int_{1-c}^{c} \left| \frac{\zeta '}{\zeta} (\sigma + iT) \zeta ^{(n)} (\sigma + iT) \right| \ d\sigma \\
    & \ll \log^2 T \left\{ \int_{1-c}^{0} + \int_{0}^{1} + \int_{1}^{c} \right\} \left| \zeta ^{(n)} (\sigma + iT) \right| \ d\sigma \\
    & \ll \log^2 T \left\{ \int_{1-c}^{0} T^{\frac{1}{2} - \sigma} \log^{n+1} T \ d\sigma + \int_{0}^{1} T^{\frac{1}{2} (1-\sigma)} \log^{n+1} T \ d\sigma + \int_{1}^{c} \log^{n+1} T \ d\sigma \right\} \\
    & \ll T^{\frac{1}{2}} \log^{n+2} T
\end{align*}
since $c=1+ \frac{1}{\log T}$.
\end{proof}

\begin{lemma}
The integral along the RHS of the contour is $S^R = O(\log^{n+3} T)$.
\end{lemma}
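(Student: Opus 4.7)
The plan is to exploit the absolutely convergent Dirichlet series representations of both $\zeta'/\zeta$ and $\zeta^{(n)}$ on the line $\mathrm{Re}(s) = c$, multiply them into a single Dirichlet series, and integrate term-by-term in $t$. The oscillatory integral $\int_1^T N^{-it}\,dt$ contributes a saving of $1/\log N$ for each $N \geq 2$, which is the whole point: a trivial $L^\infty$ estimate of the integrand would give an error of size $T$, whereas termwise integration removes the $T$ entirely and leaves only a logarithmic power.

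First I would expand, for $\sigma > 1$,
$$\frac{\zeta'(s)}{\zeta(s)}\,\zeta^{(n)}(s) \;=\; (-1)^{n+1}\sum_{N \geq 2}\frac{b(N)}{N^s}, \qquad b(N) = \sum_{\substack{mk=N \\ m \geq 2}} \Lambda(m)(\log k)^n,$$
noting that the $N=1$ term vanishes because $\Lambda(1)=0$. The crude bound
$$|b(N)| \;\leq\; (\log N)^n \sum_{m \mid N}\Lambda(m) \;=\; (\log N)^{n+1}$$
is enough. Parameterising $s = c+it$ on the right-hand side of the contour and interchanging sum and integral (legitimate by absolute convergence on $\sigma=c>1$), one gets
$$S^R \;=\; \frac{(-1)^{n+1}}{2\pi}\sum_{N\geq 2}\frac{b(N)}{N^c}\cdot\frac{N^{-i}-N^{-iT}}{-i\log N},$$
so that, taking absolute values and bounding each oscillatory factor by $2/\log N$,
$$|S^R| \;\ll\; \sum_{N\geq 2}\frac{(\log N)^n}{N^c}.$$

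The remaining sum is estimated by comparison with $\int_2^\infty (\log x)^n x^{-c}\,dx$; the substitution $u = (c-1)\log x$ transforms it into $(c-1)^{-n-1}\int_0^\infty u^n e^{-u}\,du = n!\,(c-1)^{-n-1}$. With $c - 1 = 1/\log T$ this yields $\ll (\log T)^{n+1}$, which sits comfortably inside the claimed $O(\log^{n+3} T)$ bound (the two extra logs absorb any slack one might incur at intermediate steps, for instance in bounding $|b(N)|$).

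There is no genuine obstacle here; the argument is purely bookkeeping. The one conceptual point worth being careful about is that $b(1) = 0$: if the constant term were nonzero, the integral $\int_1^T 1\,dt = T-1$ would reappear and wreck the bound, so it is essential to record that $\frac{\zeta'}{\zeta}\zeta^{(n)}$ has no $1/N^s$ term with $N=1$ in its Dirichlet expansion.
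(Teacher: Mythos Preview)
Your proof is correct and follows essentially the same route as the paper: expand $\frac{\zeta'}{\zeta}\,\zeta^{(n)}$ as a Dirichlet series on the line $\sigma = c$, integrate termwise, and bound each oscillatory integral $\int_1^T N^{-it}\,dt$ by $O(1/\log N)$. The paper keeps the double sum over $(r,m)$ and bounds the result by $\bigl|\frac{\zeta'}{\zeta}(c)\,\zeta^{(n)}(c)\bigr|$ via Lemmas~\ref{Convexity} and~\ref{lem:bound on log deriv zeta}, whereas you combine into a single series over $N=rm$ and estimate the resulting sum directly, obtaining the slightly sharper $O(\log^{n+1} T)$; the approaches are otherwise identical.
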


\begin{proof}
Along the RHS of the contour we need to evaluate
\[
S^R = \frac{1}{2 \pi i} \int_{c+i}^{c+iT} \frac{\zeta '}{\zeta}(s) \zeta ^{(n)} (s) \ ds.
\]
Writing $\frac{\zeta '}{\zeta}(s)$ and $\zeta ^{(n)} (s)$ in terms of their Dirichlet series, we have
\begin{equation}
\frac{\zeta '}{\zeta}(s) = -\sum_{r=1}^{\infty} \frac{\Lambda(r)}{r^s} \text{ and } \zeta ^{(n)} (s)= (-1)^n \sum _{m=1}^{\infty} \frac{\log ^n m}{m^s} \label{eq:Dirichlet Series}
\end{equation}
where $\Lambda (r)$ is the von Mangoldt function given by
\begin{equation*}
\Lambda(r) = \begin{cases}
       \log p  \quad &\text{if $r = p^k$ for some prime $p$ and some integer $k \geq 1$,} \\
        0 \quad &\text{otherwise.}
    \end{cases}
\end{equation*}
Substituting into $S^R$ we have
\begin{align*}
S^R & = \frac{1}{2 \pi i} \int_{1}^{T} \left( -\sum_{r=1}^{\infty} \frac{\Lambda(r)}{r^{c +it}} \right) \left( (-1)^n \sum _{m=1}^{\infty} \frac{\log ^n m}{m^{c +it}} \right) \ i dt \\
&\ll \sum_{r=1}^{\infty} \sum _{m=1}^{\infty} \frac{\Lambda(r) \log^n m}{r^c m^c} \int_{1}^{T} (rm)^{-it} \ dt \\
&\ll \sum_{r=1}^{\infty} \sum _{m=1}^{\infty} \frac{\Lambda(r) \log^n m}{r^c m^c \log rm} \\
&\ll \frac{\zeta '}{\zeta}(c) \zeta ^{(n)} (c) \\
&\ll \log ^{n+3} T.
\end{align*}
where the final line follows from Lemmas~\ref{Convexity} and \ref{lem:bound on log deriv zeta}.
\end{proof}

Since $S^B, S^T, S^R$ are all within a remainder term of $O(T^{\frac{1}{2}} \log^{n+2} T)$, only the integral over the LHS of the contour will contribute in any meaningful way. Observe that
\[
S^L = \frac{1}{2 \pi i} \int_{1-c+iT}^{1-c+i} \frac{\zeta '}{\zeta}(s) \zeta ^{(n)} (s) \ ds = -\frac{1}{2 \pi i} \int_{c-iT}^{c-i} \frac{\zeta '}{\zeta}(1-s) \zeta ^{(n)} (1-s) \ ds = - \overline{I},
\]
where
\begin{equation}
I = \frac{1}{2 \pi i} \int_{c+i}^{c+iT} \frac{\zeta '}{\zeta}(1-s) \zeta ^{(n)} (1-s) \ ds. \label{eq:I}
\end{equation}

\section{Evaluating $I$}\label{sect:I}

Our main aim then for this section is to evaluate $I$. To do this we want to write $I$ in such a way that we can apply Gonek's Lemma 5 from \cite{Gonek} which we will state below when we use it later in this section.

\begin{lemma}
We can write the integral $I$ given in \eqref{eq:I} as
\begin{align*}
I &= \frac{(-1)^{n+1}}{2 \pi} \int_{1}^{T} \chi (1 - c - it) \left[ \sum_{k=0}^n \binom{n}{k} \log^{n-k+1} \left( \frac{t}{2 \pi} \right) \sum_{m=1}^{\infty} \frac{(-1)^k \log^k m}{m^{c + it}} \right. \\
&\qquad \quad +\left. \sum_{k=0}^n \binom{n}{k} \log^{n-k} \left( \frac{t}{2 \pi} \right) \sum_{m=1}^{\infty} \frac{(-1)^{k+1} \log^k m}{m^{c + it}} \sum_{r=1}^{\infty} \frac{\Lambda (r)}{r^{c+it}} \right] \ dt +O(T^{\frac{1}{2}} \log^{n+2} T).
\end{align*}
\end{lemma}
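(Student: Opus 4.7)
The plan is to apply the two functional equations of Section~\ref{sect:prelim} to the integrand of $I$, expand the resulting product, and then substitute the absolutely convergent Dirichlet series for the $\zeta^{(k)}(s)$ and $\zeta'/\zeta(s)$ on the line $\Re s = c > 1$.

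Specifically, I would insert Lemma~\ref{functional zeta(1-s)},
\[
\zeta^{(n)}(1-s) = (-1)^n \chi(1-s) \sum_{k=0}^n \binom{n}{k} \log^{n-k}\!\left(\frac{t}{2\pi}\right) \zeta^{(k)}(s) + O(t^{\sigma - 3/2}\log^n t),
\]
together with \eqref{eq:LogDeriv2},
\[
\frac{\zeta'}{\zeta}(1-s) = -\log(t/2\pi) - \frac{\zeta'}{\zeta}(s) + O(t^{\sigma - 3/2}),
\]
into the integrand. Multiplying the two leading pieces and distributing the bracket $[-\log(t/2\pi) - \zeta'/\zeta(s)]$ across the $k$-sum yields
\[
(-1)^{n+1}\chi(1-s) \left\{ \sum_{k=0}^n \binom{n}{k} \log^{n-k+1}\!\left(\frac{t}{2\pi}\right)\zeta^{(k)}(s) + \sum_{k=0}^n \binom{n}{k}\log^{n-k}\!\left(\frac{t}{2\pi}\right) \frac{\zeta'}{\zeta}(s) \zeta^{(k)}(s) \right\}.
\]
Substituting $\zeta^{(k)}(s) = (-1)^k \sum_m \log^k m/m^s$ in the first sum, and additionally $\zeta'/\zeta(s) = -\sum_r \Lambda(r)/r^s$ in the second, then parameterising $s = c+it$ so that $ds/(2\pi i) = dt/(2\pi)$, delivers exactly the displayed expression with prefactor $(-1)^{n+1}/(2\pi)$; note in particular the sign $(-1)^{k+1}$ in the second summation comes from combining the $(-1)^k$ of $\zeta^{(k)}$ with the extra minus sign of $-\zeta'/\zeta$.

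The main obstacle will be bounding the three residual cross terms (main$\times$error, error$\times$main, error$\times$error) within $O(T^{1/2}\log^{n+2} T)$. Stirling yields $|\chi(1-c-it)| \asymp (t/2\pi)^{c-1/2}$, and on $\Re s = c > 1$ the Dirichlet series give $\zeta^{(k)}(s), \zeta'/\zeta(s) = O(1)$. The delicate one is error$_{\zeta'/\zeta}\cdot\text{main}_{\zeta^{(n)}}$, because a naive reading of the error in \eqref{eq:LogDeriv2} as $O(t^{c-3/2})$ combines with $|\chi(1-s)|$ to give $O(t^{2c-2}\log^n t) = O(\log^n t)$, integrating to an unacceptable $O(T\log^n T)$. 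The fix is to note that $\chi(s)\chi(1-s) = 1$ implies $\chi'/\chi(1-s) = \chi'/\chi(s)$, which by Stirling equals $-\log(t/2\pi) + O(1/t)$, so the true error in \eqref{eq:LogDeriv2} is $O(1/t)$; the delicate term is then $O(t^{-1/2}\log^n t)$ after absorbing $|\chi(1-s)|$, the other two residuals are no larger, and integrating from $1$ to $T$ produces the stated $O(T^{1/2}\log^{n+2} T)$ remainder.
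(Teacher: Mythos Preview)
Your approach is exactly the paper's: substitute Lemma~\ref{functional zeta(1-s)} and \eqref{eq:LogDeriv2}, expand, and replace $\zeta^{(k)}$ and $\zeta'/\zeta$ by their Dirichlet series on $\Re s=c$. Your handling of the remainder is actually more careful than the paper's own. The paper bounds the cross terms by quoting $\chi(1-c-it)\ll t^{1/2-c}$, but Stirling gives $|\chi(1-c-it)|\asymp t^{c-1/2}$; with the error $O(t^{c-3/2})$ as literally stated in \eqref{eq:LogDeriv2} the cross term (error of $\zeta'/\zeta$) $\times$ (main of $\zeta^{(n)}$) would then be $O(t^{2c-2}\log^{n+1}t)=O(\log^{n+1}t)$ and integrate to $O(T\log^{n+1}T)$, just as you flagged. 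Your remedy---that $\chi(s)\chi(1-s)=1$ forces $\chi'/\chi(1-s)=\chi'/\chi(s)=-\log(t/2\pi)+O(1/t)$, so the true error in \eqref{eq:LogDeriv2} is $O(1/t)$---is correct and exactly what is needed. One small correction: on $\Re s=c=1+1/\log T$ the Dirichlet series do not give $\zeta^{(k)}(s),\,\zeta'/\zeta(s)=O(1)$ uniformly in $T$; use instead $\zeta^{(k)}(c+it)\ll\log^{k+1}t$ and $\zeta'/\zeta(c+it)\ll\log^2 t$ from Lemmas~\ref{Convexity} and~\ref{lem:bound on log deriv zeta}. This turns your $O(t^{-1/2}\log^n t)$ into $O(t^{c-3/2}\log^{n+1}t)$, which still integrates to $O(T^{1/2}\log^{n+1}T)$ and is absorbed into the stated remainder.
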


\begin{proof}
Substituting the results from Lemma~\ref{functional zeta(1-s)} and \eqref{eq:LogDeriv2} into $I$ gives
\begin{align*}
I = \frac{1}{2 \pi i} \int_{c+i}^{c+iT} - &\left( \log \left( \frac{t}{2 \pi} \right) + \frac{\zeta'}{\zeta}(s) + O(t^{-1})  \right) \\
&\left( (-1)^n \chi (1-s) \sum_{k=0}^n \binom{n}{k} \log^{n-k} \left(\frac{t}{2 \pi} \right) \zeta^{(k)} (s) + O(t^{c-\frac{3}{2}} \log^{n} t) \right) \ ds.
\end{align*}
Writing $s=c+it$, expanding the bracket in the integral and simplifying the error term gives
\begin{align}
I &= \frac{(-1)^{n+1}}{2 \pi} \int_{1}^{T} \chi (1 - c - it) \left[ \sum_{k=0}^n \binom{n}{k} \log^{n-k+1} \left( \frac{t}{2 \pi} \right) \zeta^{(k)}(c +it) \right. \notag\\
&+\left. \sum_{k=0}^n \binom{n}{k} \log^{n-k} \left( \frac{t}{2 \pi} \right) \zeta^{(k)}(c+it) \frac{\zeta '}{\zeta} (c +it) \right] \ dt + O(T^{\frac{1}{2}} \log ^{n+2} T). \label{eq:I split}
\end{align}
where we have used $\chi (1-c- it) \ll t^{\frac{1}{2} -c}$, and $\zeta^{(k)}(c+it) \ll \log^{k+1} t$, and $\frac{\zeta '}{\zeta} (c +it)  \ll \log^2 t$. Writing $\zeta^{(k)}(s)$ and $\frac{\zeta '}{\zeta} (s)$ as their Dirichlet series \eqref{eq:Dirichlet Series} and substituting into $I$ gives the result.
\end{proof}

We will now require the use of Lemma 5 from Gonek~\cite[Sect. 4, p. 131]{Gonek}. As it is such an important result for our proof, we state it here, with the phrasing adapted to suit our needs.

\begin{lemma}[Gonek]\label{Gonek 5}
Let $\{b_m\}_{m=1}^{\infty}$ be a sequence of complex numbers such that for any $\varepsilon>0$, $b_m \ll m^\varepsilon$. Let $c>1$ be as before and let $k$ be a non-negative integer. Then for $T$ sufficiently large,
\[
\frac{1}{2 \pi} \int_{1}^{T} \left( \sum_{m=1}^{\infty} b_m m^{-c -it} \right) \chi (1-c-it) \log ^k \left(\frac{t}{2 \pi} \right) \ dt = \sum_{1 \leq m \leq \frac{T}{2 \pi}} b_m \log^k m + O(T^{c-\frac{1}{2}} \log ^k T).
\]
\end{lemma}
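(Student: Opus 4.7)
The plan is to reduce the stated identity to a stationary-phase analysis of a family of oscillatory integrals indexed by $m$. Since $c>1$ and $b_m\ll_\varepsilon m^\varepsilon$, the Dirichlet series $\sum_m b_m m^{-c-it}$ converges absolutely and uniformly for $t\in[1,T]$, so I may interchange summation and integration and rewrite the left-hand side as $\sum_{m\geq 1} b_m m^{-c} J_m$, where
\[
J_m := \frac{1}{2\pi}\int_1^T \chi(1-c-it)\, m^{-it}\, \log^k\!\left(\tfrac{t}{2\pi}\right) dt.
\]

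The second step is to expose the oscillation in $J_m$. Applying Stirling's formula to $\Gamma(c+it)$ in $\chi(1-c-it)=2^{1-c-it}\pi^{-c-it}\sin(\pi(1-c-it)/2)\Gamma(c+it)$ yields, for $t\geq 1$,
\[
\chi(1-c-it) = \left(\tfrac{t}{2\pi}\right)^{c-\tfrac{1}{2}} \exp\!\Bigl(i\bigl[t\log(t/(2\pi)) - t - \pi/4\bigr]\Bigr)\bigl(1+O(1/t)\bigr).
\]
Together with $m^{-it}=e^{-it\log m}$, the integrand of $J_m$ has amplitude $(t/(2\pi))^{c-1/2}\log^k(t/(2\pi))$ and phase $\phi(t)=t\log(t/(2\pi m))-t-\pi/4$, with a unique stationary point at $t_0=2\pi m$ and $\phi''(t_0)=1/(2\pi m)>0$.

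When $1\leq 2\pi m\leq T$, the classical stationary-phase formula then gives
\[
J_m = m^c \log^k m \cdot e^{i(\phi(2\pi m)+\pi/4)} + (\text{lower order}),
\]
and since $\phi(2\pi m)+\pi/4=-2\pi m$ and $m$ is a positive integer, $e^{i(\phi(2\pi m)+\pi/4)}=e^{-2\pi i m}=1$. The contribution to $\sum b_m m^{-c} J_m$ is therefore precisely $b_m\log^k m$, producing the main term $\sum_{m\leq T/(2\pi)} b_m \log^k m$. For $m$ whose saddle lies outside $[1,T]$, $\phi'(t)=\log(t/(2\pi m))$ stays away from zero on $[1,T]$, and repeated integration by parts bounds $J_m$ by an expression decaying in powers of $|\log(t/(2\pi m))|^{-1}$ and dominated by the endpoint at $t=T$.

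The principal obstacle is the transition regime $m\approx T/(2\pi)$, where integration by parts becomes inefficient because $\phi'(t)$ can be small throughout $[1,T]$. I would handle this by treating the short window of $m$ near $T/(2\pi)$ via the trivial amplitude bound $|J_m|\ll T^{c-1/2}\log^k T$, and bounding the remaining non-stationary contributions using the absolute convergence afforded by $b_m\ll m^\varepsilon$. Collecting the stationary-phase main terms, the boundary error arising from terminating the saddle-point asymptotic at $t=T$, and the non-stationary tail, yields the claimed error $O(T^{c-1/2}\log^k T)$, where the $T^{c-1/2}$ arises from the amplitude $(t/(2\pi))^{c-1/2}$ at $t=T$ and the $\log^k T$ from its logarithmic weight.
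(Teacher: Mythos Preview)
The paper does not prove this lemma; it is quoted verbatim as Gonek's Lemma~5 from \cite{Gonek} and used as a black box. So there is no ``paper's own proof'' to compare against here. Your outline---interchange sum and integral, expand $\chi(1-c-it)$ via Stirling to expose the phase $\phi(t)=t\log(t/2\pi m)-t-\pi/4$, pick up the stationary point at $t_0=2\pi m$, and control the non-stationary tail by integration by parts---is the standard route and is essentially how Gonek argues in the cited paper.

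One small point to tighten: your ``trivial amplitude bound $|J_m|\ll T^{c-1/2}\log^k T$'' in the transition window $m\approx T/(2\pi)$ is not a trivial bound at all---integrating the amplitude naively gives $T^{c+1/2}\log^k T$, which is a full power of $T$ too large. What you actually need there is the second-derivative (van der Corput) estimate: since $\phi''(t)=1/t\asymp 1/T$ on the relevant range, the oscillatory integral over an interval of length $\ell$ near $t=T$ is $\ll T^{c-1/2}\log^k T\cdot\min(\ell,\sqrt{T})$, and after multiplying by $b_m m^{-c}\ll m^{\varepsilon-c}\asymp T^{\varepsilon-c}$ and summing over the $O(\sqrt{T})$ values of $m$ in the transition window one recovers $O(T^{c-1/2+\varepsilon}\log^k T)$. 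With that fix the sketch is sound.
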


We can finally simplify $I$ to get a single sum that we will work on evaluating in the next section.

\begin{lemma}\label{Lemma 11}
The integral $I$ can be written as
\[
I= (-1)^n \sum_{m r \leq \frac{T}{2 \pi}} \Lambda(r) \log^n r + O(T^{\frac{1}{2}}\log^{n+2} T)
\]
where the sum is taken over all integers $m$ and $r$ such that $mr \leq \frac{T}{2\pi}$.
\end{lemma}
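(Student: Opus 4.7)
The plan is to apply Gonek's Lemma 5 directly to the two bracketed pieces in the integral representation of $I$ from the previous lemma, then use the binomial theorem to collapse the resulting sums. For the first piece, the inner Dirichlet series has coefficients $b_m=(-1)^k \log^k m \ll m^\varepsilon$. For the second piece, the inner object is a product of two Dirichlet series, whose coefficients are the convolution $c_N = (-1)^{k+1}\sum_{mr=N}\Lambda(r)\log^k m$, bounded trivially by $\log^{k+1}N \ll N^\varepsilon$. Gonek's Lemma 5 therefore converts each term of the integrand into a truncated sum up to an error of size $O(T^{c-1/2}\log^\ell T)=O(T^{1/2}\log^\ell T)$, where $\ell$ is the corresponding power of $\log(t/2\pi)$.

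For the first piece, after applying Gonek's Lemma 5 term-by-term we obtain $\sum_{k=0}^n \binom{n}{k}(-1)^k \sum_{m\leq T/(2\pi)} \log^k m \cdot \log^{n-k+1} m$; the inner product is $\log^{n+1} m$, independent of $k$, so pulling it out yields the factor $\sum_{k=0}^n\binom{n}{k}(-1)^k=(1-1)^n=0$, and this piece contributes nothing. For the second piece, after applying Gonek's Lemma 5 and interchanging the sums over $k$ and over $mr\leq T/(2\pi)$, the inner $k$-sum becomes
\[
\sum_{k=0}^n \binom{n}{k}(-\log m)^k \log^{n-k}(mr) = \bigl(\log(mr)-\log m\bigr)^n = \log^n r
\]
by the binomial theorem. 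Consequently this piece collapses to $-\sum_{mr\leq T/(2\pi)}\Lambda(r)\log^n r$.

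Multiplying by the overall prefactor $(-1)^{n+1}$ gives the claimed main term $(-1)^n\sum_{mr\leq T/(2\pi)}\Lambda(r)\log^n r$. The new errors from Gonek's Lemma 5 total $O(T^{1/2}\log^{n+1} T)$ (the worst case being $\ell = n+1$ from the $k=0$ term of the first piece) and are absorbed into the pre-existing $O(T^{1/2}\log^{n+2} T)$ remainder. The one substantive step is the binomial-theorem collapse in the second piece, which explains the emergence of the single clean factor $\log^n r$; all other manipulations are routine bookkeeping.
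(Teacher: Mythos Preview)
Your proposal is correct and follows essentially the same route as the paper: split into the two bracketed pieces, apply Gonek's Lemma~5 to each (treating the product of Dirichlet series in the second piece as a single series with convolution coefficients), then use the binomial identity $\sum_k\binom{n}{k}(-1)^k=0$ to kill the first piece and the binomial collapse $(\log(mr)-\log m)^n=\log^n r$ for the second. Your error bookkeeping, with the dominant $O(T^{1/2}\log^{n+1}T)$ coming from the $k=0$ term of the first piece and being absorbed into the existing $O(T^{1/2}\log^{n+2}T)$, matches the paper exactly.
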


\begin{proof}
Before we apply Lemma~\ref{Gonek 5}, we will split the integral $I$ given in \eqref{eq:I split} substituting the Dirichlet series to simplify the argument slightly. To do this, we write
\begin{align*}
I &= \frac{(-1)^{n+1}}{2 \pi } \int_{1}^{T} \chi (1 - c - it) \sum_{k=0}^n \binom{n}{k} \log^{n-k+1} \left( \frac{t}{2 \pi} \right) \sum_{m=1}^{\infty} \frac{(-1)^k \log^k m}{m^{c + it}} \ dt \\
& \quad +\frac{(-1)^{n+1}}{2 \pi } \int_{1}^{T} \chi (1 - c - it) \sum_{k=0}^n \binom{n}{k} \log^{n-k} \left( \frac{t}{2 \pi} \right) \sum_{m=1}^{\infty} \frac{(-1)^{k+1} \log^k m}{m^{c + it}} \sum_{r=1}^{\infty} \frac{\Lambda (r)}{r^{c+it}} \ dt \\
& \quad + O(T^{\frac{1}{2}} \log^{n+2} T) \\
&= J_1 + J_2 +O(T^{\frac{1}{2}} \log^{n+2} T),
\end{align*}
say.

For $J_1$, we have
\begin{align*}
J_1 &= (-1)^{n+1}\sum_{k=0}^n \binom{n}{k} (-1)^k  \frac{1}{2 \pi } \int_{1}^{T} \chi (1 - c - it) \log^{n-k+1} \left( \frac{t}{2 \pi} \right) \sum_{m=1}^{\infty} \frac{ \log^k m}{m^{c + it}} \ dt \\
&= (-1)^{n+1} \sum_{k=0}^n \binom{n}{k} (-1)^k \left( \sum_{m \leq \frac{T}{2 \pi}} \log^k m \log^{n-k+1} m +O(T^{\frac{1}{2}}\log^{n-k+1} T) \right)\\
&= (-1)^{n+1} \sum_{k=0}^n \binom{n}{k} (-1)^k \sum_{m \leq \frac{T}{2 \pi}} \log^{n+1} m + O(T^{\frac{1}{2}}\log^{n+1} T) \\
&=0 + O(T^{\frac{1}{2}}\log^{n+1} T)
\end{align*}
since by the Binomial Theorem,
\[
\sum_{k=0}^n \binom{n}{k} (-1)^k = (1 + (-1))^n = 0.
\]

For $J_2$, we have
\begin{align*}
J_2 &= (-1)^{n+1} \sum_{k=0}^n \binom{n}{k} (-1)^{k+1} \frac{1}{2 \pi } \int_{1}^{T} \chi (1 - c - it)  \log^{n-k} \left( \frac{t}{2 \pi} \right) \sum_{m=1}^{\infty} \frac{ \log^k m}{m^{\sigma + it}} \sum_{r=1}^{\infty} \frac{\Lambda (r)}{r^{c+it}} \ dt \\
&= (-1)^{n+1} \sum_{k=0}^n \binom{n}{k} (-1)^{k+1} \frac{1}{2 \pi } \int_{1}^{T} \chi (1 - c - it)  \log^{n-k} \left( \frac{t}{2 \pi} \right) \sum_{mr=1}^{\infty} \frac{ \Lambda (r) \log^k m }{(mr)^{c + it}} \ dt \\
&= (-1)^{n+1} \sum_{k=0}^n \binom{n}{k} (-1)^{k+1} \left( \sum_{m r \leq \frac{T}{2 \pi}} \Lambda (r) \log^k (m) \log^{n-k} (m r) + O(T^{\frac{1}{2}}\log^{n-k} T)\right)\\
&= (-1)^{n} \sum_{m r \leq \frac{T}{2 \pi}} \Lambda (r) \sum_{k=0}^n \binom{n}{k} (-\log m)^k \log^{n-k} (m r)  + O(T^{\frac{1}{2}}\log^{n} T) \\
&= (-1)^{n} \sum_{m r \leq \frac{T}{2 \pi}} \Lambda (r) [\log (m r) -\log m)]^n + O(T^{\frac{1}{2}}\log^{n} T)\\
&= (-1)^{n} \sum_{m r \leq \frac{T}{2 \pi}} \Lambda (r) \log ^n r + O(T^{\frac{1}{2}}\log^{n} T).
\end{align*}
where the last three sums are over all integers $m$ and $r$ such that $mr \leq \frac{T}{2\pi}$. Our results follow from combining $J_1$ and $J_2$.
\end{proof}

Therefore
\begin{align*}
S^L &= - \overline{I} \\
&= (-1)^{n+1} \sum_{m r \leq \frac{T}{2 \pi}} \Lambda (r) \log ^n r + O(T^{\frac{1}{2}}\log^{n+2} T).
\end{align*}
Since we have shown that the terms from $S^B, S^T, S^R$ are harmless within the error term of $O(T^{\frac{1}{2}} \log^{n+2} T)$, we have that the integral given in equation \eqref{eq:S} is equal to the sum
\begin{equation}\label{eq:SasSumOverLambda}
S= (-1)^{n+1} \sum_{m r \leq \frac{T}{2 \pi}} \Lambda (r) \log ^n r + O(T^{\frac{1}{2}} \log^{n+2} T).
\end{equation}

\section{Evaluating the sum $S$}\label{sect:EvaluateS}
As we have discussed above, all that remains to do is to evaluate the sum in \eqref{eq:SasSumOverLambda}. To do this, we first note that by Perron's formula \cite[Ch. 5, Sect. 5.1]{MNT}, we have
\[
(-1)^{n+1} \sum_{m r \leq \frac{T}{2 \pi}} \Lambda (r) \log ^n r = \frac{1}{2 \pi i } \int_{c- i \infty}^{c + i \infty}\left( \frac{\zeta'}{\zeta}(s) \right)^{(n)} \zeta(s) \frac{Y^s}{s} \ ds
\]
where we set $Y=\frac{T}{2\pi}$ and $c=1 + \frac{1}{\log T}$ as before.

Since we want to be able to evaluate the integral on the RHS, we will modify this slightly and instead use a truncated Perron formula.

\begin{lemma}\label{lem:SumOverLambdaEqualsLR}
For $2 \leq V \leq Y$, as $Y \to \infty$,
\[
(-1)^{n+1} \sum_{m r \leq Y} \Lambda (r) \log ^n r = \frac{1}{2 \pi i } \int_{c- iV}^{c + iV} \left( \frac{\zeta'}{\zeta}(s) \right)^{(n)} \zeta(s) \frac{Y^s}{s} \ ds + R(Y,V),
\]
where
\[
R(Y,V) \ll \frac{Y}{V} \log^{n+2} Y.
\]
\end{lemma}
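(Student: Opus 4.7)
The plan is to apply a truncated Perron formula to the Dirichlet series whose coefficients are $a_k=\sum_{mr=k}\Lambda(r)\log^n r$. An $n$-fold term-by-term differentiation of $\zeta'/\zeta(s)=-\sum_r\Lambda(r)r^{-s}$, followed by multiplication by $\zeta(s)$, yields
\[
(-1)^{n+1}\zeta(s)\left(\frac{\zeta'}{\zeta}(s)\right)^{(n)}=\sum_{k=1}^{\infty}\frac{a_k}{k^s},
\]
so it will suffice to apply Perron to the right-hand side and then multiply through by $(-1)^{n+1}$. The only information on the coefficients we shall need is the elementary bound $|a_k|\leq \log^{n+1}k$, which follows from $\sum_{mr=k}\Lambda(r)=\log k$.

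Next I would invoke the standard truncated Perron formula (as in \cite[\S 5.1]{MNT}): for $c>1$ and $V\geq 2$,
\[
{\sum_{k\leq Y}}' a_k = \frac{1}{2\pi i}\int_{c-iV}^{c+iV}\left(\sum_{k=1}^{\infty}\frac{a_k}{k^s}\right)\frac{Y^s}{s}\,ds + O\!\left(\sum_{k=1}^{\infty}\frac{|a_k|}{k^c}\left(\frac{Y}{k}\right)^{\!c}\min\!\left(1,\frac{1}{V\,|\log(Y/k)|}\right)\right),
\]
the prime indicating that a term $k=Y$ (if $Y\in\mathbb{Z}$) is halved. Any such halving costs at most $O(a_Y)=O(\log^{n+1}Y)$ and is absorbed in the claimed error. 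With the standing choice $c=1+1/\log T$ we have $Y^c\ll Y$ and $(c-1)^{-1}\ll \log Y$.

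It remains to show that the error sum is $\ll Y\log^{n+2}Y/V$, which I would do by splitting the $k$-range around $Y$. For the tails $k\leq Y/2$ or $k\geq 2Y$ we have $|\log(Y/k)|\gg 1$, so the minimum is $\ll 1/V$ and this range contributes
\[
\ll\frac{Y^c}{V}\sum_{k=1}^{\infty}\frac{\log^{n+1}k}{k^c}\ll\frac{Y}{V}\bigl|\zeta^{(n+1)}(c)\bigr|\ll\frac{Y\log^{n+2}Y}{V},
\]
using the Laurent expansion of $\zeta$ at $s=1$ to estimate $|\zeta^{(n+1)}(c)|\ll (c-1)^{-(n+2)}\ll \log^{n+2}Y$. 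For the central block $Y/2<k<2Y$, $k\neq Y$, we use $(Y/k)^c\ll 1$, $|a_k|\ll \log^{n+1}Y$, and $|\log(Y/k)|\gg |Y-k|/Y$; separating integers with $|Y-k|\leq Y/V$ (at most $O(Y/V)$ of them, minimum $\leq 1$) from those with $|Y-k|>Y/V$ (minimum $\ll Y/(V|Y-k|)$, giving $\ll (Y/V)\log Y$ after a dyadic sum) yields a total $\ll \log^{n+1}Y\cdot (Y/V)\log Y\ll Y\log^{n+2}Y/V$, completing the bound.

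The only mildly delicate step is this central block, where $\log(Y/k)$ vanishes near $k=Y$; but the crude coefficient bound $|a_k|\ll \log^{n+1}k$ is already strong enough to absorb it without any further arithmetic input, so the argument should go through cleanly.
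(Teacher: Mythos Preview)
Your proposal is correct and follows essentially the same approach as the paper: identify the Dirichlet series, use the coefficient bound $|a_k|\leq \log^{n+1}k$ from $\sum_{r\mid k}\Lambda(r)=\log k$, apply the truncated Perron formula from \cite[\S5.1]{MNT}, and split the error into a ``far'' piece (bounded via the Dirichlet series at $s=c$) and a ``near'' piece $Y/2<k<2Y$ (bounded by harmonic-type sums). The only cosmetic differences are that the paper quotes the version of Perron's formula in which the near/far split is already built in, and it bounds the far sum via $\bigl|(\zeta'/\zeta)^{(n)}(c)\,\zeta(c)\bigr|$ rather than your $\bigl|\zeta^{(n+1)}(c)\bigr|$; both give the same $\log^{n+2}Y$.
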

We will use a specific $V$ later in this section. The choice of $V$ will depend on whether we assume RH or not.

\begin{proof}
If we let $a(k)$ denote the coefficients in Dirichlet series for  $\left( \frac{\zeta'}{\zeta}(s) \right)^{(n)} \zeta(s)$, namely 
\[
a(k) = (-1)^{n+1} \sum_{r \mid k} \Lambda(r) \log^n r 
\]
then by the truncated Perron formula \cite[Ch. 5, Sect. 5.1]{MNT} we have that the integral in the Lemma equals 
\[
\sum_{k\leq Y} a(k) + R(Y,V)
\]
with
\begin{align*}
R(Y,V) &\ll \sum_{\frac{Y}{2} < k < 2Y} |a(k)| \min \left( 1, \frac{Y}{V |Y-k|} \right) + \frac{4^c+Y^{c}}{V} \sum_{k=1}^{\infty} \frac{|a(k)|}{k^{c}} \\
& = A+B,
\end{align*}
Writing $k=mr$ we see $\sum_{k\leq Y} a(k)$ equals the sum in the Lemma. To evaluate the error, note that $|a(k)| \leq \log^{n+1} k$ (with equality only if $k$ is prime). Therefore
\begin{align*}
    A &\ll \log^{n+1} Y \sum_{\frac{Y}{2} < k< 2Y} \min \left( 1, \frac{Y}{V |Y-k|} \right) \\
    &\ll \log^{n+1} Y \sum_{\ell \leq Y} \frac{Y}{V} \frac{1}{\ell} \\
    &\ll \frac{Y}{V} \log^{n+2} Y
\end{align*}

For $B$, since $c=1+\frac{1}{\log T}$, the Dirichlet series converges, and so 
\[
B = \frac{4^c+Y^c}{V} \left| \left( \frac{\zeta'}{\zeta}(c) \right)^{(n)} \zeta(c) \right| \ll \frac{Y}{V} \log^{n+2} Y
\]
where we use the fact that $Y \asymp T$.

Combining $A$ and $B$ gives the required bound on $R(Y,V)$.
\end{proof}

\section{The Error Term} \label{sect: ErrorTerm}
We will show that the error term can be described explicitly as follows, depending on whether we assume RH or not. 

\begin{lemma}\label{ErrorLemma}
For $Y=\frac{T}{2\pi}$, as $Y\to\infty$, we have
\[
\frac{1}{2 \pi i } \int_{c- iV}^{c + iV}\left( \frac{\zeta'}{\zeta}(s) \right)^{(n)} \zeta(s) \frac{Y^s}{s} \ ds = \Res_{s=1} \left( \frac{\zeta'}{\zeta}(s) \right)^{(n)} \zeta(s) \frac{Y^s}{s} + E_n(Y)
\]
where $E_n(Y)$ is an error term given by one of the following two cases:
\begin{enumerate}
\item Unconditionally, by setting $V = \mathrm{e}^{C \sqrt{\log Y}}$, we obtain $E_n(Y) = O \left(Y \mathrm{e}^{-C \sqrt{\log Y}} \right)$.
\item Assuming RH, setting $V=2\pi Y$, we obtain $E_n(Y) = O \left( Y^{\frac{1}{2}} \log^{n+\frac{5}{2}} Y \right)$.
\end{enumerate}
where  $C$ a positive constant that is not necessarily the same in each instance.
\end{lemma}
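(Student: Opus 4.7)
The plan is a standard Perron-type contour shift. Write $F(s) = \left(\frac{\zeta'}{\zeta}(s)\right)^{(n)} \zeta(s) \frac{Y^s}{s}$. In the strip $\alpha \leq \sigma \leq c$, for any $0 < \alpha < 1$ chosen below, $F$ has a single pole at $s=1$: $\frac{\zeta'}{\zeta}$ inherits a simple pole from $\zeta$, so $\left(\frac{\zeta'}{\zeta}\right)^{(n)}$ has a pole of order $n+1$, and hence $F$ a pole of order $n+2$. I close the vertical segment $[c-iV, c+iV]$ into the rectangle with vertices $c \pm iV$, $\alpha \pm iV$; Cauchy's residue theorem gives
\[
\frac{1}{2\pi i} \int_{c-iV}^{c+iV} F(s)\, ds = \Res_{s=1} F(s) + \frac{1}{2\pi i} \int_{\alpha-iV}^{\alpha+iV} F(s)\, ds + H_+ - H_-,
\]
where $H_\pm$ are the horizontal segments at heights $\pm V$. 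It remains to bound the vertical and horizontal remainders in each case, with $V$ adjusted by $O(1/\log V)$ if necessary so that $|V-\gamma| \gg 1/\log V$ and Lemma~\ref{lem:bound on log deriv zeta} applies.

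For the unconditional estimate I would take $\alpha = 1 - c_1/\log V$ with $c_1$ small enough for the shifted vertical line to lie inside the classical de la Vall\'ee Poussin zero-free region. There the standard estimate $\zeta(s) \ll \log t$ and Lemma~\ref{lem:bound on log deriv zeta} together give a pointwise integrand bound of $\log^{n+3} t \cdot Y^\alpha/t$. Integrating in $t$ yields $\ll Y^\alpha \log^{n+4} V = Y \,\mathrm{e}^{-c_1 \log Y/\log V}\log^{n+4} V$. The choice $V = \mathrm{e}^{C\sqrt{\log Y}}$ converts the exponential into $\mathrm{e}^{-(c_1/C)\sqrt{\log Y}}$, which defeats the polylog and delivers the claimed $O\!\bigl(Y\mathrm{e}^{-C'\sqrt{\log Y}}\bigr)$. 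The horizontal pieces are smaller still, carrying an extra factor of $1/V$ from the $Y^s/s$ at height $V$.

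Under RH I would shift all the way to $\alpha = \tfrac{1}{2} + 1/\log Y$ and set $V = 2\pi Y$. Lemma~\ref{lem:bound on log deriv zeta} again provides $\left(\frac{\zeta'}{\zeta}\right)^{(n)}(\alpha + it) \ll \log^{n+2} V$ uniformly, which I pull out of the integral. The task reduces to $\int_1^V |\zeta(\alpha+it)|/t\, dt$, bounded via Cauchy--Schwarz with the efficient weighting
\[
\int_1^V \frac{|\zeta(\alpha+it)|}{t}\, dt \leq \left(\int_1^V \frac{dt}{t^2}\right)^{\!1/2} \left(\int_1^V |\zeta(\alpha+it)|^2\, dt\right)^{\!1/2} \ll (V \log V)^{1/2},
\]
using the classical mean-square of $\zeta$ just to the right of the critical line. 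Together with $Y^\alpha \asymp Y^{1/2}$ and the $\log^{n+2} V$ factor, this produces exactly $O\bigl(Y^{1/2} \log^{n+5/2} Y\bigr)$ on the vertical line. The horizontal segments at height $V = 2\pi Y$ are handled by Lemma~\ref{Convexity} and Lemma~\ref{lem:bound on log deriv zeta}, and after integration in $\sigma \in [\alpha, c]$ contribute only $O(\log^{n+2} Y)$, well inside the error.

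The main obstacle is the sharpness of the RH estimate: a less careful Cauchy--Schwarz---for instance distributing the $1/t$ weight evenly between the two factors---yields $Y^{1/2}\log^{n+7/2} Y$, one logarithm too large. The split above, in which all of $1/t$ is absorbed into the deterministic factor so that the second factor is the bare mean-square of $\zeta$, is precisely what matches the $V\log V$ bound on the mean-square and produces the claimed $\log^{n+5/2}$ exponent.
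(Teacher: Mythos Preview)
Your unconditional argument is fine and is essentially the paper's: shift to $\sigma=1-c_1/\log V$ inside the classical zero-free region, bound the horizontal and vertical pieces by Lemmas~\ref{Convexity} and~\ref{lem:bound on log deriv zeta}, and balance with $V=\mathrm{e}^{C\sqrt{\log Y}}$.

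The RH argument, however, contains a genuine arithmetic error that breaks the proof. After pulling out $Y^{\alpha}\asymp Y^{1/2}$ and $\log^{n+2}V$, your Cauchy--Schwarz gives
\[
\int_{1}^{V}\frac{|\zeta(\alpha+it)|}{t}\,dt \;\le\; \Bigl(\int_{1}^{V}\frac{dt}{t^{2}}\Bigr)^{1/2}\Bigl(\int_{1}^{V}|\zeta(\alpha+it)|^{2}\,dt\Bigr)^{1/2}\;\ll\;(V\log V)^{1/2}.
\]
With $V=2\pi Y$ this is $\asymp Y^{1/2}(\log Y)^{1/2}$, so the vertical contribution is
\[
Y^{1/2}\cdot(\log Y)^{n+2}\cdot Y^{1/2}(\log Y)^{1/2}\;=\;Y(\log Y)^{n+5/2},
\]
not $Y^{1/2}(\log Y)^{n+5/2}$; you have dropped a factor of $Y^{1/2}$. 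Ironically the split you dismiss --- distributing $1/t$ evenly --- is the one that keeps the answer finite: it yields $\int_{1}^{V}|\zeta|/t\,dt\ll(\log V)^{3/2}$ and hence $Y^{1/2}(\log Y)^{n+7/2}$, still one logarithm too many. No weighting of Cauchy--Schwarz against the continuous mean square of $\zeta$ on $\sigma=\tfrac12^{+}$ will reach $\log^{n+5/2}$; the paper itself notes that staying to the right of the critical line only gives $O(T^{1/2+\varepsilon})$.

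The paper obtains the sharp exponent by a different mechanism: it shifts \emph{past} the critical line to $c'=-\frac{1}{\log T}$, thereby picking up residues at every zero $\rho$ with $|\gamma|<V$. These residues are linear combinations of $\zeta^{(k)}(\rho)Y^{\rho}\rho^{-k_3-1}\log^{k_2}Y$, and their sum over zeros is controlled by Cauchy--Schwarz together with Gonek's discrete second moment
\[
\sum_{0<\gamma\le T}\bigl|\zeta^{(k)}(\tfrac12+i\gamma)\bigr|^{2}\;\ll\;T\log^{2k+2}T,
\]
which is precisely what produces the half-power of $\log$ and the final bound $Y^{1/2}\log^{n+5/2}Y$. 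This discrete input over zeros --- absent from your argument --- is the missing idea.
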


We will calculate the residue in Section \ref{sect:8}. First, we will show how we can obtain the different expressions for $E_n(Y)$ in the following subsections. We will also show in the last subsection that for $n=1$, we are able to obtain a slightly better error term than that given above or in Fujii~\cite{FujiiShanks}.

\subsection{The Unconditional Case}
From Titchmarsh~\cite[Sect. 3.8, p. 54]{Titchmarsh}, we know there is a positive constant $C$ such that for $c' = 1 - \frac{C}{\log V}$, all the zeros of $\zeta(s)$ are $\gg\frac{1}{\log T}$ away from the line running from $c'-i V$ to $c'+ i V$. By Cauchy's Residue Theorem, the integral is
\begin{multline*}
    \frac{1}{2 \pi i } \int_{c- iV}^{c + iV} \left( \frac{\zeta'}{\zeta}(s) \right)^{(n)} \zeta(s) \frac{Y^s}{s} \ ds = \Res_{s=1} \left( \frac{\zeta'}{\zeta}(s) \right)^{(n)} \zeta(s) \frac{Y^s}{s} \\
    + \frac{1}{2 \pi i } \left(\int_{c' + iV}^{c + iV} + \int_{c' - iV}^{c' + iV} - \int_{c' - iV}^{c - iV} \right) \left( \frac{\zeta'}{\zeta}(s) \right)^{(n)} \zeta(s) \frac{Y^s}{s} \ ds.
\end{multline*}

By Lemmas~\ref{Convexity} and \ref{lem:bound on log deriv zeta}, if $V\ll T$, the integral on the horizontal lines can be estimated as
\begin{equation*}
    \frac{1}{2 \pi i } \int_{c' \pm iV}^{c \pm iV} \left( \frac{\zeta'}{\zeta}(s) \right)^{(n)} \zeta(s) \frac{Y^s}{s} \ ds \ll \log^{n+3} V \frac{Y^c}{V}(c-c') \ll \frac{Y}{V} \log^{n+2} V
\end{equation*}
where we use the fact that $c-c' \ll \frac{1}{\log V}$ and $Y^c \ll Y$ since $c=1+\frac{1}{\log(2\pi Y)}$.

For the integral on the vertical line, we have
\[
    \frac{1}{2 \pi i } \int_{c' - iV}^{c' + iV} \left( \frac{\zeta'}{\zeta}(s) \right)^{(n)} \zeta(s) \frac{Y^s}{s} \ ds \ll Y^{c'} \log^{n+3} V \int_{-V}^{V} \frac{1}{1+|t|} \ dt  \ll  Y^{c'} \log^{n+4} V 
\]
Since $c' = 1-\frac{C}{\log V}$, balancing the two errors comes from taking $V = \exp(C\sqrt{\log Y})$ for some positive constant $C$, and so we have
\[
\frac{1}{2 \pi i } \int_{c- iV}^{c + iV}\left( \frac{\zeta'}{\zeta}(s) \right)^{(n)} \zeta(s) \frac{Y^s}{s} \ ds = \Res_{s=1} \left( \frac{\zeta'}{\zeta}(s) \right)^{(n)} \zeta(s) \frac{Y^s}{s} + O \left(Y \mathrm{e}^{-C \sqrt{\log Y}} \right).
\]

\subsection{The Conditional Case}
Throughout this subsection we assume the Riemann Hypothesis, RH. 

One approach would be to choose $c' = \frac{1}{2} + \frac{1}{\log T}$ to guarantee a zero-free region, and set $V=T=2\pi Y$. In that case, an application of Cauchy's Residue Theorem yields
\[
\frac{1}{2 \pi i } \int_{c- iV}^{c + iV}\left( \frac{\zeta'}{\zeta}(s) \right)^{(n)} \zeta(s) \frac{Y^s}{s} \ ds = \Res_{s=1} \left( \frac{\zeta'}{\zeta}(s) \right)^{(n)} \zeta(s) \frac{Y^s}{s} + O \left( T^{\frac{1}{2} + \varepsilon}\right)
\]
for $\varepsilon >0$, where the horizontal pieces of the contour are estimated in a manner similar to that below, and the vertical piece of the contour uses the bound $\zeta(s) \ll t^{\varepsilon}$ for $\sigma \geq 1/2$.

However, we can get a better bound, one that depends explicitly upon $n$ by choosing $c' = 1- c = - \frac{1}{\log T}$ (that is, just to the left of the critical strip) and $V \ll T$. By Cauchy's Residue Theorem,
\begin{multline*}
    \frac{1}{2 \pi i } \int_{c- iV}^{c + iV} \left( \frac{\zeta'}{\zeta}(s) \right)^{(n)} \zeta(s) \frac{Y^s}{s} \ ds = \Res_{s=1} \left( \frac{\zeta'}{\zeta}(s) \right)^{(n)} \zeta(s) \frac{Y^s}{s} \\ + \sum_{|\gamma|<V} \Res_{s=\rho} \left( \frac{\zeta'}{\zeta}(s) \right)^{(n)} \zeta(s) \frac{Y^s}{s} 
    + \frac{1}{2 \pi i } \left(\int_{c' + iV}^{c + iV} + \int_{c' - iV}^{c' + iV} - \int_{c' - iV}^{c - iV} \right) \left( \frac{\zeta'}{\zeta}(s) \right)^{(n)} \zeta(s) \frac{Y^s}{s} \ ds,
\end{multline*}
where the sum runs through $\rho = \beta + i\gamma$, the zeros of $\zeta (s)$, lying inside the contour.

To estimate the integral on the horizontal lines, choose $V$ so that all the zeros of zeta are bounded by $\gg \frac1{\log V}$ away from the horizontal line. Therefore we may use Lemma~\ref{lem:bound on log deriv zeta} to bound the logarithmic derivative by $\log^{n+2} V$ along the line. Using the convexity result from Lemma~\ref{Convexity}, we obtain, unconditionally,
\begin{align*}
     \frac{1}{2 \pi i } \int_{c' \pm iV}^{c \pm iV} \left( \frac{\zeta'}{\zeta}(s) \right)^{(n)} \zeta(s) \frac{Y^s}{s} ds \ll &\log^{n+2} V \int_{-\frac{1}{\log V}}^0 V^{\frac{1}{2}-\sigma} \log V \frac{Y^{\sigma}}{V} \ d\sigma \\
     &+ \log^{n+2} V \int_0^1 V^{\frac{1}{2}(1-\sigma)} \log V \frac{Y^{\sigma}}{V} \ d\sigma \\
     &+ \log^{n+2} V \int_1^{1+\frac{1}{\log V}} \log V \frac{Y^{\sigma}}{V} \ d\sigma \\
     \ll&  \frac{Y}{V} \log^{n+2} V
\end{align*}

For the integral on the vertical line, since $V\leq T$ and $c'=-\frac{1}{\log T}$ we may again use Lemma~\ref{lem:bound on log deriv zeta} as the vertical line is bounded away from any zeros of zeta. Using Lemma~\ref{Convexity} to bound zeta just to the left of the critical strip, we obtain (again unconditionally),
\begin{equation*}
    \frac{1}{2 \pi i } \int_{c' - iV}^{c' + iV} \left( \frac{\zeta'}{\zeta}(s) \right)^{(n)} \zeta(s) \frac{Y^s}{s} \ ds  \ll \int_1^{V} \log^{n+2}t \  t^{\frac12} \log t \ \frac{1}{t} \ dt 
    \ll V^{\frac{1}{2}} \log^{n+3} V
\end{equation*}

We now consider the poles at $s=\rho$ for each $\rho$ with $|\gamma| < V$, where $\rho$ is a zero of $\zeta (s)$.

First we note that for $-1 \leq \sigma \leq 2$ and $0<t_0 \leq t \leq V$, we have
\[
\frac{\zeta '}{\zeta} (\sigma+i t) = \sum_{|\gamma - t| < 1} \frac{1}{s-\rho} + O(\log V).
\]
Being careful with the error term, we may differentiate this $n$ times to give
\begin{equation*}
    \left( \frac{\zeta '}{\zeta} (\sigma + it) \right)^{(n)} = \sum_{|\gamma - t| < 1} \frac{(-1)^n n!}{(s-\rho)^{n+1}} + O(\log V). 
\end{equation*}

For each $\rho$, we need to consider the coefficient of $(s-\rho)^n$ in the expansion of $\zeta(s) \frac{Y^s}{s}$ to find the residue at $s=\rho$. For this, note that by the triple product rule, we may write
\[
\left( \zeta(s) \frac{Y^s}{s} \right)^{(n)} = \sum_{\substack{k_1+k_2+k_3 = n \\ k_1, k_2, k_3 \geq 0}} \binom{n}{k_1, k_2, k_3} \zeta^{(k_1)} (s) (Y^s)^{(k_2)} \left( \frac{1}{s} \right)^{(k_3)}
\]
where $\binom{n}{k_1, k_2, k_3}$ is the multinomial coefficent given by
\[
\binom{n}{k_1, k_2, k_3} = \frac{n!}{k_1! k_2! k_3!}.
\]

Therefore, at each zero $\rho$,
\begin{equation*}
    \left. \left( \zeta(s) \frac{Y^s}{s} \right)^{(n)} \right\vert_{s=\rho}= \sum_{\substack{k_1+k_2+k_3 = n \\ k_1, k_2, k_3 \geq 0}} (-1)^{k_3} k_3!  \binom{n}{k_1, k_2, k_3} \frac{\zeta^{(k_1)} (\rho) Y^{\rho} \log^{k_2} Y} {\rho^{k_3+1}}.
\end{equation*}

As we are just bounding these terms, we do not worry about the coefficient such as the $n!$, $(-1)^n$ and the multinomial coefficients. Assuming RH, so $\rho = \frac12 + i\gamma$ and summing over all zeros with $|\gamma| < V$, we have 
\begin{equation}\label{eq:ResidueSum}
   \sum_{|\gamma|<V}  \Res_{s=\rho} \left( \frac{\zeta'}{\zeta}(s) \right)^{(n)} \zeta(s) \frac{Y^s}{s} \ll Y^{\frac{1}{2}} \sum_{\substack{k_1+k_2+k_3 = n \\ k_1, k_2, k_3 \geq 0}} \log^{k_2} Y  \sum_{\gamma < V}  \frac{\left| \zeta^{(k_1)} (\frac12 +i\gamma) \right|} {\left| \frac12+i\gamma \right| ^{k_3+1}}.
\end{equation}

Now assuming RH, by Gonek \cite{Gonek} we have that for all positive integers $n$,
\[
\sum_{0< \gamma \leq T} \left|\zeta^{(n)} \left(\frac{1}{2} + i\gamma \right) \right|^2 \ll T \log^{2n+2} T.
\]

We will use this result together with the Cauchy-Schwarz inequality and partial summation to bound each term in the our summation. We see that the terms in the right hand side of \eqref{eq:ResidueSum} can be bounded by
\begin{align*}
    \log^{k_2} Y  \sum_{\gamma < V}  \frac{\left| \zeta^{(k_1)} (\frac12 +i\gamma) \right|} {\left| \frac12+i\gamma \right| ^{k_3+1}} &\ll \log ^{k_2} Y \left( \sum_{\gamma \leq V} \frac{|\zeta^{(k_1)} (\frac{1}{2} + i\gamma)|^2}{\gamma^{1+k_3}}
\right)^{\frac{1}{2}} \left(\sum_{\gamma \leq V} \frac{1}{\gamma^{1+k_3}}\right)^{\frac{1}{2}} \\
&\ll  \log ^{k_2} Y \left( \int_{1}^{V} \frac{t \ \log^{2j+2} t}{t^{2+k_3}} \ dt \right)^{\frac{1}{2}} \left( \int_{1}^{V} \frac{\log t}{t^{1+k_3}} \ dt \right)^{\frac{1}{2}} \\
&\ll 
\begin{cases}
\log ^{k_2} Y \left( \log^{2k_1 + 5} V \right)^{\frac{1}{2}} &\text{ if $k_3 = 0$} \\
\log ^{k_2} Y &\text{ if $k_3 \geq 1$.}
\end{cases}
\end{align*}

Clearly the dominant error term is when $k_3=0$ (which forces $k_2=n-k_1$), and so the sum over the residues at the zeros in \eqref{eq:ResidueSum} is bounded by
\begin{align*}
\sum_{|\gamma|<V} \Res_{s=\rho} \left( \frac{\zeta'}{\zeta}(s) \right)^{(n)} \zeta(s) \frac{Y^s}{s} &\ll \sum_{k_1 = 0}^n Y^{\frac{1}{2}} \log ^{n-k_1} Y \log^{k_1 + \frac{5}{2}} V \\
&\ll Y^{\frac{1}{2}} \log ^{n+\frac52} Y .
\end{align*}

Balancing this error term with the error term from the vertical line comes from taking $V=\frac{Y}{\log Y}$, and so we have
\[
\frac{1}{2 \pi i } \int_{c- iV}^{c + iV}\left( \frac{\zeta'}{\zeta}(s) \right)^{(n)} \zeta(s) \frac{Y^s}{s} \ ds = \Res_{s=1} \left( \frac{\zeta'}{\zeta}(s) \right)^{(n)} \zeta(s) \frac{Y^s}{s} + O \left( Y^{\frac{1}{2}} \log^{n+\frac{5}{2}} Y \right).
\]

\subsection{An improvement on the error term in the conditional case for $n=1$}
In this section we assume RH. When $n=1$ we have an error bound of $E_1(T) = O \left( T^{\frac{1}{2}} \log^{\frac{7}{2}} T\right)$ which agrees with Fujii's result in \cite{FujiiShanks}. 

From the previous section, we see that
\begin{equation*}
\sum_{|\gamma|<V}  \Res_{s=\rho} \left( \frac{\zeta'}{\zeta}(s) \right)' \zeta(s) \frac{Y^s}{s} \ll Y^{\frac12} \sum_{0 < \gamma < V} \frac{\left|\zeta' \left( \frac{1}{2} + i\gamma \right) \right|}{|\frac12+i\gamma|}
\end{equation*}

Garaev \cite{Garaev} has shown that under RH, we have
\[
\sum_{0 < \gamma \leq T} \left|\zeta ' \left( \frac{1}{2} + i\gamma \right) \right| \ll T \log^{\frac{9}{4}} T.
\]
\begin{remark}
Garaev's paper assumes additionally that all the zeros are simple. However, a close examination of the proof shows that this particular result does not need that additional assumption.
\end{remark}
By partial summation  
\begin{equation*}
Y^{\frac{1}{2}} \sum_{0 < \gamma < V} \frac{| \zeta ' ( \frac{1}{2} + i\gamma) |}{\gamma} \ll Y^{\frac{1}{2}} \int_{1}^{V} \frac{t \ \log^{\frac{9}{4}} t}{t^2} \ dt \ll Y^{\frac{1}{2}} \log ^{\frac{13}{4}} V
\end{equation*}
so setting $V= Y/\log^{\frac{3}{2}} Y$ to balance this error term with the error coming from the vertical line in the contour yields
\[
E_1(T) = O\left(T^{\frac12} \log^{\frac{13}{4}} T \right)
\]
an improvement of $\log^{\frac{1}{4}} T$ in our error term.

\section{Finding the Leading Asymptotic Terms}\label{sect:8}
We now evaluate
\[
\Res_{s=1} \left( \frac{\zeta'}{\zeta}(s) \right)^{(n)} \zeta(s) \frac{Y^s}{s}
\]
from Lemma~\ref{ErrorLemma}. We expand each of the terms in this residue calculation in their Laurent expansions about $s=1$. Since
\[
\frac{\zeta ' (s)}{\zeta (s)} = - \frac{1}{s-1} +A_0 + A_1 (s-1) +...+ A_k (s-1)^k + ... + A_n (s-1)^n +O((s-1)^{n+1}),
\]
we have

\begin{enumerate}
\item The Laurent expansion for $\left( \frac{\zeta'}{\zeta}(s) \right)^{(n)}$ about $s=1$:
\[
\left( \frac{\zeta'}{\zeta}(s) \right)^{(n)} = \frac{(-1)^{n+1} n!}{(s-1)^{n+1}} + n! A_n + O((s-1))
\]
\item The Laurent expansion for $\zeta(s)$ about $s=1$:
\[
\zeta(s) = \frac{1}{s-1} +C_0 + C_1 (s-1) +...+ C_k (s-1)^k +... + C_n (s-1)^n +O((s-1)^{n+1})
\]
\item The Laurent expansion for $Y^s$ about $s=1$:
\begin{align*}
Y^s =& Y \left( 1 + (s-1) \log Y +... + \frac{(s-1)^k}{k!} \log^k Y +...+  \frac{(s-1)^n}{n!} \log^n Y \right. \\
& \left.+  \frac{(s-1)^{n+1}}{(n+1)!} \log^{n+1} Y + O((s-1)^{n+2}) \right)
\end{align*}
\item The Laurent expansion for $\frac{1}{s}$ about $s=1$:
\[
\frac{1}{s} = 1 - (s-1) + (s-1)^2 +...+ (-1)^k (s-1)^k +... + (-1)^n (s-1)^n + O((s-1)^{n+1})
\]
\end{enumerate}
We now work out the residue at $s=1$ by considering the terms in powers of $\log^k Y$, as $k$ runs from $k=n+1$ down to $k=0$. We consider a combinatorial style argument to make sure we consider all possible terms. The following tables show this for various powers of $\log^k Y$.

To calculate the resulting contribution to the residue for the leading order behaviour, that is, the coefficient of $Y \log^{n+1} Y$, there is only one way to obtain a factor of $\frac{1}{s-1}$ for the residue calculation. We have to have the $\frac{(-1)^{n+1} n!}{(s-1)^{n+1}}$ term from $\left( \frac{\zeta'}{\zeta}(s) \right)^{(n)}$, the $\frac{1}{s-1}$ term from $\zeta (s)$, the $\frac{(s-1)^{n+1}}{(n+1)!}$ term from $Y^s$ and the $1$ term from $\frac{1}{s}$. Combined, the coefficient for $Y \log^{n+1} Y$ is $\frac{(-1)^{n+1}}{n+1}$.

To calculate the sub-leading term there are two distinct possibilities.
\begin{enumerate}
\item The first possibility is to have the $\frac{(-1)^{n+1} n!}{(s-1)^{n+1}}$ term from $\left( \frac{\zeta'}{\zeta}(s) \right)^{(n)}$, the $C_0$ term from $\zeta(s)$, the $\frac{(s-1)^{n}}{n!}$ term from $Y^s$ and the $1$ term from $\frac{1}{s}$. The resulting contribution to the residue for this term is $(-1)^{n+1} [C_0]$.
\item The other possibility is to have the $\frac{(-1)^{n+1} n!}{(s-1)^{n+1}}$ term from $\left( \frac{\zeta'}{\zeta}(s) \right)^{(n)}$, the $\frac{1}{s-1}$ term from $\zeta(s)$, the $\frac{(s-1)^{n}}{n!}$ term from $Y^s$ and the $-(s-1)$ term from $\frac{1}{s}$. The resulting contribution to the residue for this term is $(-1)^{n+1} [-1]$.
\end{enumerate}
Combined, the coefficient for the $Y \log^{n} Y$ term is $(-1)^{n+1} [C_0 -1]$

The other terms in all lower order cases follow in a similar manner --- Table~\ref{table:3} shows all possibilities, with one extra caveat for the lowest order term which has an additional term.
\FloatBarrier

\begin{table}[htp]\small
\centering
\begin{tabular}{||c c c c | c||}
 \hline
$\left( \frac{\zeta'}{\zeta}(s) \right)^{(n)}$ & $\zeta(s)$ & $Y^s$ & $\frac{1}{s}$ & Contribution to the residue \\ [1ex]
 \hline \hline
$\frac{(-1)^{n+1} n!}{(s-1)^{n+1}}$ & $C_k(s-1)^k$ & $\frac{(s-1)^{n-k}Y \log^{n-k} Y}{(n-k)!}$ & $1$ & $ (-1)^{n+1} \frac{n!}{(n-k)!} C_k Y \log^{n-k} Y$ \\ [1ex]
\hline
$\frac{(-1)^{n+1} n!}{(s-1)^{n+1}}$ & $C_{k-1}(s-1)^{k-1}$ & $\frac{(s-1)^{n-k}Y \log^{n-k} Y}{(n-k)!}$ & $-(s-1)$ & $ (-1)^{n} \frac{n!}{(n-k)!} C_{k-1} Y \log^{n-k} Y$ \\ [1ex]
\hline
\vdots & \vdots & \vdots & \vdots & \vdots \\ [1ex]
\hline
$\frac{(-1)^{n+1} n!}{(s-1)^{n+1}}$ & $C_{1}(s-1)$ & $\frac{(s-1)^{n-k}Y \log^{n-k} Y}{(n-k)!}$ & $(-1)^{k-1}(s-1)^{k-1}$ & $ (-1)^{n+k} \frac{n!}{(n-k)!} C_{1} Y \log^{n-k} Y$ \\ [1ex]
\hline
$\frac{(-1)^{n+1} n!}{(s-1)^{n+1}}$ & $C_{0}$ & $\frac{(s-1)^{n-k}Y \log^{n-k} Y}{(n-k)!}$ & $(-1)^{k}(s-1)^k$ & $(-1)^{n+k+1} \frac{n!}{(n-k)!} C_{0} Y \log^{n-k} Y$ \\ [1ex]
\hline
$\frac{(-1)^{n+1} n!}{(s-1)^{n+1}}$ & $\frac{1}{s-1}$  & $\frac{(s-1)^{n-k}Y \log^{n-k} Y}{(n-k)!}$ & $(-1)^{k+1}(s-1)^{k+1}$ & $ (-1)^{n+k} \frac{n!}{(n-k)!} Y \log^{n-k} Y$ \\ [1ex]
\hline
\end{tabular}
\caption{Table to show which terms in the Laurent expansion of each piece of $L$ combine to form a contribution to the $Y \log^{n-k} Y$ term in the residue ($k=0,...,n$).}
\label{table:3}
\end{table}

In the lowest-leading order case, the coefficient of the $Y$ term, there are all the usual terms for the sub-leading behaviour, with one additional term. The additional term comes from the $n! A_n$ term from $\left( \frac{\zeta'}{\zeta}(s) \right)^{(n)}$, the $\frac{1}{s-1}$ term from $\zeta (s)$, the $1$ term from $Y^s$, and the $1$ term from $\frac{1}{s}$. Combined, the extra term for $Y$ has coefficient $n! A_n$.

Combining all of the terms in the above tables, and putting $Y=\frac{T}{2\pi}$ we have shown that
\begin{align*}
\Res_{s=1}& \left( \frac{\zeta'}{\zeta}(s) \right)^{(n)} \zeta(s) \frac{\left(\frac{T}{2\pi}\right)^s}{s} =
(-1)^{n+1} \frac{1}{n+1} \frac{T}{2 \pi} \log^{n+1} \left( \frac{T}{2 \pi} \right)   \\
&+ (-1)^{n+1} \sum_{k=0}^n \binom{n}{k}  (-1)^ {k}  k! \left(-1 + \sum_{j=0}^k (-1)^j C_j \right) \frac{T}{2 \pi} \log^{n-k} \left( \frac{T}{2 \pi} \right)  +  n! A_n\frac{T}{2 \pi}.
\end{align*}

Combining this with the results from Lemma~\ref{ErrorLemma} and using the appropriate choices for $V$ in Lemma~\ref{lem:SumOverLambdaEqualsLR} we see that
\[
(-1)^{n+1} \sum_{m r \leq \frac{T}{2 \pi}} \Lambda (r) \log ^n r = \Res_{s=1} \left( \frac{\zeta'}{\zeta}(s) \right)^{(n)} \zeta(s) \frac{\left(\frac{T}{2\pi}\right)^s}{s} + E_n(T).
\]
Finally, \eqref{eq:SasSumOverLambda} shows that sum is equal to $S= \sum_{0 < \gamma \leq T} \zeta ^{(n)} (\rho)$ (plus an error that is smaller than $E_n(T)$), and the result follows.

\section{Examples of first moments of specific $n$\textsuperscript{th} derivatives of the Riemann zeta function $\zeta (s)$}\label{sect:9}
The first test of a good general theorem is if it can recover any previously known results. We begin this section by doing exactly that in the $n=1$ case where we recover Shanks' Conjecture and Fujii's asymptotics (Theorem~\ref{Shanks' Conjecture}) from Chapter 1. After that we give another example, as well as showing some numerical data showing that our result is in excellent agreement with the true values. We have chosen $n=2$ to show the most simple new case in full detail. The calculations for higher level derivatives become unwieldy to do by hand so we could employ a computer package to manage this for us to obtain any specific $n$\textsuperscript{th} order derivative that we choose.

\subsection{The case $n=1$ (Shanks' Conjecture)}
We can use Theorem~\ref{General SC} to recover Fujii's asymptotic formula Theorem~\ref{Shanks' Conjecture} (with an improved error term) and hence  Shanks' Conjecture from Section 1. To do this, set $n=1$ in the theorem to obtain
\begin{multline*}
\sum_{0 < \gamma \leq T} \zeta'(\rho) =
 \frac{1}{2} \frac{T}{2 \pi} \log^{2} \left( \frac{T}{2 \pi} \right) + \sum_{k=0}^1 \binom{1}{k}  (-1)^ {k}  k!  \left(-1 + \sum_{j=0}^k (-1)^j C_j \right) \frac{T}{2 \pi} \log^{1-k} \left( \frac{T}{2 \pi} \right) \\
+ A_1 \frac{T}{2 \pi} + E_1(T)
\end{multline*}
Note that
\[
A_1 = 2C_1 - \sum_{j=0}^{0} A_j C_{-j} = 2C_1 -A_0C_0 = 2C_1 - C_0^2
\]
since $A_0=C_0$ (mentioned in the preamble before Theorem~\ref{General SC}). Substituting this into the expression above and simplifying gives
\begin{align*}
\sum_{0 < \gamma \leq T} \zeta' (\rho) =
& \frac{T}{4 \pi} \log^{2} \left( \frac{T}{2 \pi} \right) +(-1 +C_0)\frac{T}{2\pi} \log \left(\frac{T}{2\pi} \right)  + (1-C_0 -C_0^2 +3 C_1)\frac{T}{2 \pi}  \\
& + E_1(T)
\end{align*}
in perfect agreement with Theorem~\ref{Shanks' Conjecture} (with the improved error term from Theorem~\ref{General SC} under RH).

\subsection{The case $n=2$}
We set $n=2$ in Theorem~\ref{General SC} to obtain
\begin{multline*}
\sum_{0 < \gamma \leq T} \zeta''(\rho) =
 - \frac{1}{3} \frac{T}{2 \pi} \log^{3} \left( \frac{T}{2 \pi} \right) - \sum_{k=0}^2 \binom{2}{k}  (-1)^ {k}  k!  \left(-1 + \sum_{j=0}^k (-1)^j C_j \right) \frac{T}{2 \pi} \log^{2-k} \left( \frac{T}{2 \pi} \right) \\
 + 2 A_2  \frac{T}{2 \pi}+ E_2(T)
\end{multline*}
Note that
\[
A_2 = 3C_2 - \sum_{j=0}^{1} A_j C_{1-j} = 3C_2 - A_0C_1 - A_1C_0 = 3C_2 - 3C_0 C_1 + C_0^3
\]
Substituting this into the expression above and simplifying gives
\begin{align*}
\sum_{0 < \gamma \leq T} \zeta'' (\rho) =
& - \frac{T}{6 \pi} \log^{3} \left( \frac{T}{2 \pi} \right) -  (-1 +C_0)\frac{T}{2\pi} \log^2 \left(\frac{T}{2\pi} \right) +   (-2+2C_0 -2C_1) \frac{T}{2 \pi}\log \left(\frac{T}{2\pi} \right) \\
&+ (2-2C_0+2C_1+4C_2-6C_0C_1+2C_0^3)\frac{T}{2\pi}  + E_2(T).
\end{align*}
The error term $E_2(T)$ is $O\left(T e^{-C \sqrt{\log T}}\right)$ unconditionally, and assuming RH can be improved to $O(T^{\frac{1}{2}} \log ^{\frac{9}{2}} T)$.

We show some numerical evidence of the result for $n=2$. The 100,000th zero has imaginary part roughly equal to $T=74920.8$, and the true value of $\sum_{0 < \gamma \leq T} \zeta''(\rho)$ is $-2.93961 \times 10^6 + 228.43i$. Clearly the imaginary part is small as we expect, and we can tell by the graph that the result for the real part of the sum is extremely accurate even for this relatively small number of zeros, since the error never exceeds 2,400.
\begin{figure}[htp]
\centering
\includegraphics[scale=0.25]{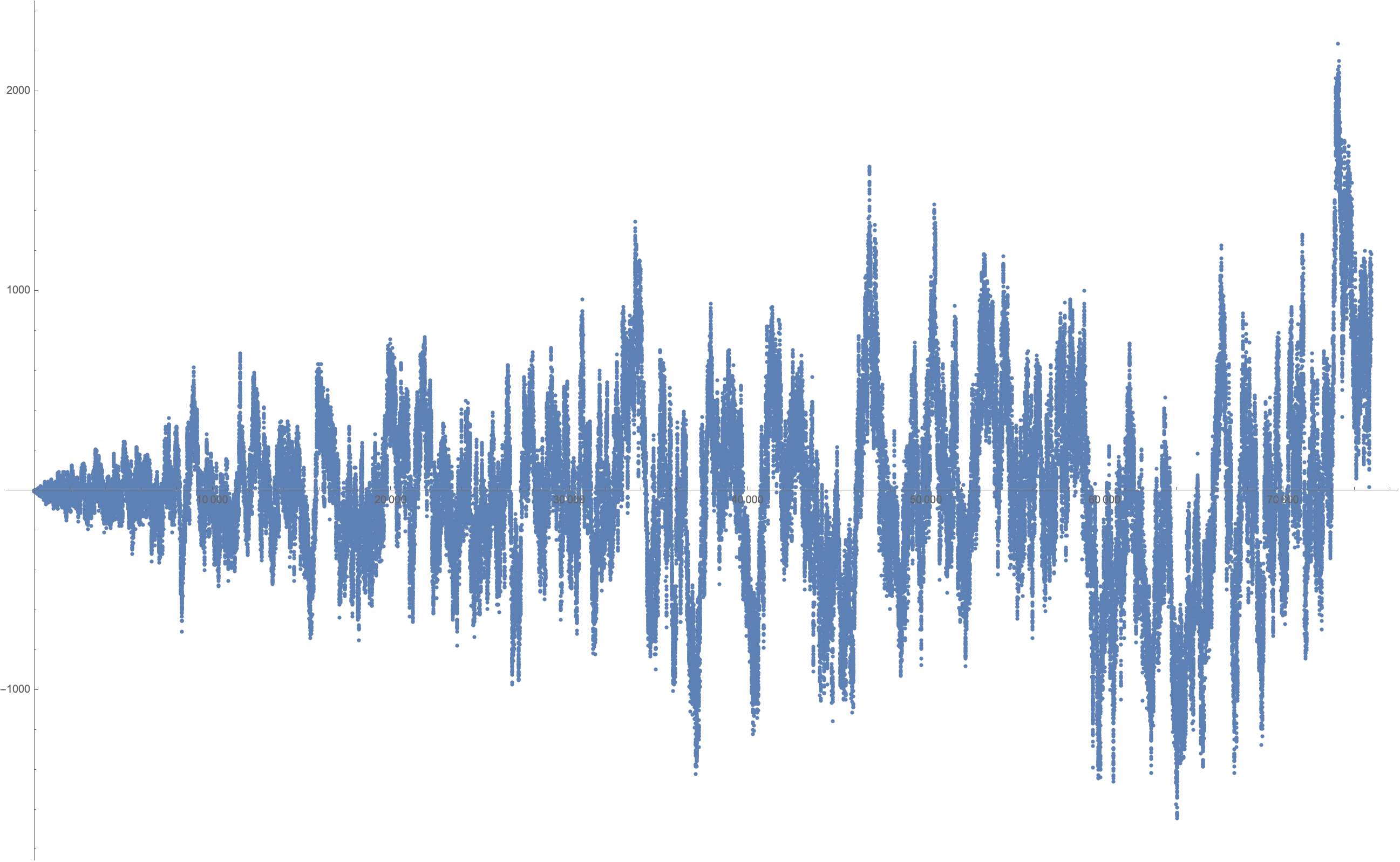}
\caption{Difference in the real part of the actual value of $\sum_{0 < \gamma \leq T} \zeta'' (\rho)$ and the whole asymptotic result of the equation, for $T$ up to the height of the 100,000th zero, showing the real error at each point.}
\end{figure}

\section*{\textbf{Acknowledgements}}
The authors are grateful to the anonymous referee for their comments on the paper. We are particularly thankful for their suggestions with regard to our calculation for $S^T$, which shortened and neatened this argument considerably.

\begin{remark}
This work will form part of the second author's PhD thesis at the University of York \cite{theAPC}.

This research did not receive any specific grant from funding agencies in the public, commercial, or not-for-profit sectors.
\end{remark}

\end{document}